\documentclass[11pt]{article}

\usepackage[letterpaper,margin=0.9in]{geometry}
\usepackage[utf8]{inputenc}
\usepackage[T1]{fontenc}
\usepackage{microtype}
\usepackage[numbers]{natbib}
\usepackage{hyperref}
\usepackage{booktabs}
\usepackage{nicefrac}

\usepackage{amsfonts}
\usepackage{amsmath}
\usepackage{amsthm}
\usepackage{amssymb}
\usepackage{dsfont}
\usepackage{comment}
\usepackage{multirow}
\usepackage{multicol}
\usepackage{threeparttable}
\usepackage{wrapfig}
\usepackage{xcolor}
\usepackage{color}
\usepackage{graphicx}
\usepackage{pifont}%
\newcommand{\algname}[1]{{\sf#1}\xspace}
\newcommand{\eqdef}{\coloneqq}

\usepackage{verbatim}
\usepackage{xspace} %

\usepackage{enumerate}
\usepackage{enumitem}

\usepackage{subcaption}

\providecommand{\algname}[1]{{\sf#1}\xspace}

\providecommand{\norm}[1]{\left\lVert#1\right\rVert}

  \providecommand{\R}{\mathbb{R}} %

  \providecommand{\Eb}[1]{{\mathbb E}\left[#1\right] }       %

  \DeclareMathOperator*{\argmin}{arg\,min}
  
  \DeclareMathOperator{\dom}{dom}

  \providecommand{\0}{\mathbf{0}}

  \providecommand{\dd}{\mathbf{d}}

  \renewcommand{\gg}{\mathbf{g}}

  \providecommand{\xx}{\mathbf{x}}
  \providecommand{\yy}{\mathbf{y}}

  \providecommand{\cO}{\mathcal{O}}

  \usepackage{bm}

\usepackage{cleveref}

\newtheorem{lemma}{Lemma}
\newtheorem{fact}{Fact}

\newtheorem{remark}{Remark}
\newtheorem{assumption}{Assumption}
\newtheorem{theorem}{Theorem}

\usepackage{url}

\renewcommand{\epsilon}{\varepsilon}

\usepackage{algorithm}
\usepackage[noend]{algpseudocode}

\errorcontextlines\maxdimen

\makeatletter
    \newcommand*{\algrule}[1][\algorithmicindent]{\makebox[#1][l]{\hspace*{.5em}\thealgruleextra\vrule height \thealgruleheight depth \thealgruledepth}}%
\newcommand*{\thealgruleextra}{}
\newcommand*{\thealgruleheight}{.75\baselineskip}
\newcommand*{\thealgruledepth}{.25\baselineskip}

\newcount\ALG@printindent@tempcnta
\def\ALG@printindent{%
    \ifnum \theALG@nested>0%
        \ifx\ALG@text\ALG@x@notext%
        \else
            \unskip
            \addvspace{-1pt}%
            \ALG@printindent@tempcnta=1
            \loop
                \algrule[\csname ALG@ind@\the\ALG@printindent@tempcnta\endcsname]%
                \advance \ALG@printindent@tempcnta 1
            \ifnum \ALG@printindent@tempcnta<\numexpr\theALG@nested+1\relax%
            \repeat
        \fi
    \fi
    }%
\usepackage{etoolbox}
\patchcmd{\ALG@doentity}{\noindent\hskip\ALG@tlm}{\ALG@printindent}{}{\errmessage{failed to patch}}
\makeatother

\newbox\statebox
\newcommand{\myState}[1]{%
    \setbox\statebox=\vbox{#1}%
    \edef\thealgruleheight{\dimexpr \the\ht\statebox+1pt\relax}%
    \edef\thealgruledepth{\dimexpr \the\dp\statebox+1pt\relax}%
    \ifdim\thealgruleheight<.75\baselineskip
        \def\thealgruleheight{\dimexpr .75\baselineskip+1pt\relax}%
    \fi
    \ifdim\thealgruledepth<.25\baselineskip
        \def\thealgruledepth{\dimexpr .25\baselineskip+1pt\relax}%
    \fi
    \State #1%
    \def\thealgruleheight{\dimexpr .75\baselineskip+1pt\relax}%
    \def\thealgruledepth{\dimexpr .25\baselineskip+1pt\relax}%
}

\usepackage{mathtools}
\DeclarePairedDelimiterX{\inp}[2]{\langle}{\rangle}{#1, #2}
\DeclarePairedDelimiterX{\cbr}[1]{\{}{\}}{#1} %
\DeclarePairedDelimiterX{\rbr}[1]{(}{)}{#1} %
\DeclarePairedDelimiterX{\sbr}[1]{[}{]}{#1} %

\usepackage{thm-restate}

\crefname{fact}{Fact}{Facts}
\Crefname{fact}{Fact}{Facts}
\crefname{assumption}{Assumption}{Assumptions}
\Crefname{assumption}{Assumption}{Assumptions}

\usepackage{import}

\title{The Dual Averaging Power-Prox Method \\with Application to Heavy-Tail Incremental Gradient}

\author{%
  Yuan Gao\thanks{CISPA Helmholtz Center for Information Security, Germany. \texttt{\{yuan.gao, michael.rack, stich\}@cispa.de}.} \thanks{Universit\"at des Saarlandes, Germany.}
  \and
  Jeremy Rack\footnotemark[1] \footnotemark[2]
  \and
  Sebastian U. Stich\footnotemark[1]
}

\date{}

\begin{document}

\maketitle

\begin{abstract}
We study finite-sum composite optimization under two departures from classical stochastic gradient descent theory that are central in practice: incremental gradient access and heavy-tailed gradient noise. Specifically, we consider fixed cyclic passes over component gradients and assume that, at the optimum, component gradients have a bounded $q$-th centralized moment for some $q\in(1,2]$. This setting is much closer to modern ML training practice than the assumptions used in classical \algname{SGD} theory, yet its theoretical understanding remains limited. We propose a Dual Averaging Power-Prox method for incremental gradients and establish, to the best of our knowledge, the first convergence analysis in this regime. We further show that our method achieves a better asymptotic convergence rate than the corresponding \algname{SGD} method with i.i.d. (with-replacement) sampling.
\end{abstract}

\section{Introduction}
\label{sec:introduction}
Solving the following finite-sum composite optimization problem is one of the most fundamental tasks in machine learning and optimization (see \Cref{sec:problem-formulation} for more precise definitions and assumptions):
\[
    \min_{\xx \in \dom\psi} F(\xx) \eqdef f(\xx) + \psi(\xx), \quad \text{where} \quad f(\xx) =\frac{1}{n} \sum_{i=0}^{n-1} f_i(\xx).
\]
Problems of this form are ubiquitous in machine learning, as they cover a wide range of applications, including regularized learning problems~\citep{Liu_2015_CVPR}, signal processing~\citep{combettes2011proximal}, image processing~\citep{Luke2020}, and many others. They also naturally cover constrained optimization by using the indicator function of the constraint set. Recent variants have been applied to distributed and federated learning as well~\citep{mishchenko2022proxskip}. 

In modern ML training, $n$ represents the number of component functions and is often extremely large. Computing the full gradient of $f$ therefore becomes prohibitively expensive. As a result, stochastic gradient descent (\algname{SGD}), which computes one component gradient per iteration, has become the workhorse of modern ML. Classical \algname{SGD} theory relies on a few fundamental assumptions on component-gradient access and gradient noise~\citep{ghadimi2016mini,ghadimi2013accelerated}. Namely,
\begin{itemize}
    \item[(A1)] \textbf{Unbiasedness and independence:} The stochastic gradient at iteration $t$ is an unbiased estimator of the full gradient, i.e., $\Eb{\gg_t} = \nabla f(\xx_t)$, and is independent of the past, i.e., the randomness in $\gg_t$ is independent of all iterations up to $\xx_t$.
    \item[(A2)] \textbf{Bounded variance:} The \emph{variance} of the stochastic gradient is bounded, i.e., $\Eb{\norm{\gg_t - \nabla f(\xx_t)}^2} \leq \sigma^2$.   
\end{itemize}
While a vast literature has been built on (A1) and (A2), these assumptions deviate significantly from modern practice. Accordingly, recent work has focused on understanding gradient methods that relax either (A1) or (A2).

Regarding (A1), almost all modern ML training algorithms access component gradients in an \emph{epoch-wise} manner, i.e., sequentially within each epoch. This access pattern provides better cache locality and often better empirical performance than i.i.d. sampling. It is the de facto standard in modern ML frameworks and its variants are widely used to train deep neural networks, in particular large language models~\citep{bengio2012practical,chowdhery2023palm,touvron2023llama}. 

From a theoretical perspective, epoch-wise access creates strong dependence across iterations, so the gradient at a given iteration is no longer unbiased. One can even fix the epoch permutation, in which case there is no randomness at all in gradient access. This pattern is known as \emph{incremental gradient}. Incremental gradient methods have been studied extensively in optimization, dating back at least to \citet{nedic2001convergence}, and have gained increasing attention in recent years due to their close relevance to practice~\citep{mishchenko2022proximal,mishchenko2020random,liu2024last,koloskova2024on}. 

Regarding (A2), many recent works study extensions and relaxations of the bounded-variance assumption~\citep{khaledbetter,alacaoglu2025towards}. Most notably, there has been a surge of interest in gradient methods under heavy-tailed noise, i.e., when stochastic gradients have a bounded $q$-th centralized moment for some $q\in (1,2]$~\citep{vural2022mirror,fatkhullin2025can,hubler2025gradient,fradintight}. This direction is strongly motivated by empirical evidence that gradients in many ML workloads are heavy-tailed, including image classification~\citep{simsekli2019tail,battash2024revisiting} and policy-gradient optimization in reinforcement learning~\citep{garg2021proximal}. Since heavy-tailed noise can imply unbounded variance, vanilla \algname{SGD} may diverge. Recent work either proposes \algname{SGD} variants robust to heavy-tailed noise or establishes lower bounds for such methods.

Despite extensive progress on each axis individually, there is very little work that treats both simultaneously, i.e., incremental gradient methods under heavy-tailed noise. Yet this is arguably the most natural and relevant setting for modern ML training. In this work, we fill precisely this gap.

\subsection{Our contributions}
As discussed above, incremental gradient methods and heavy-tailed noise are both fundamental in modern ML training, but they are mostly understood in isolation. This leaves a clear theoretical gap: understanding incremental gradient methods under heavy-tailed noise. In this work, we:

\begin{itemize}
    \item We introduce the dual-averaging power-prox framework. The method combines dual averaging with a $p$-th power-proximal regularizer, where $p$ is conjugate to the heavy-tail moment parameter $q$. This design is tailored to bounded $q$-th moment assumptions and provides the algorithmic basis for both the SGD and incremental-gradient methods studied in the paper. We analyze the dual-averaging power-prox method both when gradient access is stochastic and i.i.d. and when gradient access is incremental. To the best of our knowledge, this is the first analysis of an incremental method under heavy-tailed noise. We show a better asymptotic convergence rate than the corresponding \algname{SGD} method.
    \item We further show that dual averaging addresses a common restriction in analyzing incremental methods~\citep{mishchenko2022proximal,liu2024last}: proximal operations can only be performed at the end of epochs. Our method and analysis allow proximal updates at every iteration, which may be of independent interest.
    \item We also corroborate our theoretical findings with numerical experiments.
\end{itemize}

\subsection{Related work}
There is a large body of work on incremental gradient methods and on optimization under heavy-tailed noise. We briefly review the most relevant results.
\paragraph{Incremental gradient methods} Theoretical analysis of incremental gradient methods (more generally, shuffling-style methods) is a long-standing topic in optimization, dating back at least to \citet{nedic2001convergence}. The work of \citet{gurbuzbalaban2021random} was among the first to establish that random-shuffling methods can admit better guarantees than standard \algname{SGD} in strongly convex settings. Subsequent works extended these results to broader settings and simplified parts of the analysis, e.g., \citet{haochen2019random,nguyen2021unified,mishchenko2020random,liu2024last}; much of this literature focuses on asymptotic rates. Recent works also improved non-dominant terms, for example by removing epoch-size dependence in the stepsize for non-convex non-composite problems~\citep{koloskova2024convergence}, or by using less restrictive notions of component smoothness~\citep{cai2023empirical}. For composite optimization, incremental analysis poses additional technical challenges: many recent results analyze variants where proximal updates are applied only at epoch boundaries~\citep{mishchenko2022proximal,liu2024last,josz2024proximal}. In contrast, our analysis shows that dual averaging can handle this issue and hence allows proximal updates at every iteration.

\paragraph{Heavy-tailed noise} Recent work on heavy-tailed noise largely focuses on \algname{Clip-SGD} and adaptive clipping variants~\citep{liu2025clipped,chezhegov2025clipping,fradintight}, as well as \algname{Normalized-SGD} and momentum-based variants~\citep{liunonconvex,fatkhullin2025can,hubler2025gradient,fradintight}. Other lines of work study sign-based \algname{SGD} methods~\citep{yu2026sign} and more general nonlinear gradient transformations~\citep{armacki2024nonlinear,armacki2025optimal}. The work most closely related to ours is \citet{vural2022mirror}, which develops order-$p$ uniformly convex mirror maps for heavy-tailed settings (and can be viewed as a modern treatment of the ideas of mirror descent~\citep{nemirovskij1983problem}). Our power-prox technique is a specific instantiation of this framework. \citet{vural2022mirror} does not analyze incremental-gradient sampling nor the dual-averaging variant; we discuss the \algname{SGD} dual averaging variant in \Cref{sec:warmup} as a warm-up to our main method.

\section{Problem formulation, assumptions, and notation}
\label{sec:problem-formulation}

We consider the finite-sum optimization problem of the following form:
\begin{equation}
    \label{eq:problem}
    \min_{\xx \in \dom\psi} F(\xx) \eqdef f(\xx) + \psi(\xx), \quad \text{where} \quad f(\xx) = \frac{1}{n}\sum_{i=0}^{n-1} f_i(\xx),
\end{equation}
where $\xx\in \R^d$ are the parameters of the model that we train, and $F:\dom \psi\subset \R^d\to \R$ is the objective. We assume that the problem has a solution, denoted by $\xx^\star\in \dom \psi$, and we write $F^\star \eqdef F(\xx^\star)$. The objective $F$ is a composite function consisting of a smooth part $f$ and a composite part $\psi$. The smooth part $f$ is a finite sum of $n$ component functions $f_i$. The composite part $\psi$ is a simple convex function for which the associated proximal subproblems are easy to solve. We denote by $\dom\psi$ the set where $\psi$ is finite. This formulation naturally encodes constrained optimization problems by letting $\psi$ be the indicator function of the constraint set. We make the following assumptions on the problem:

\begin{assumption}
\label{assumption:convexity}
We assume that $f, f_i$ are convex, and $\psi$ is convex, closed, and proper over the convex domain $\dom\psi$. Moreover, we assume that each $f_i$ is differentiable. %
\end{assumption}
Next we assume that each $f_i$ is $L$-smooth, which is standard in the literature of incremental gradient methods~\citep{mishchenko2022proximal,mishchenko2020random,liu2024last,koloskova2024on}. 
\begin{assumption}
    \label{assumption:smoothness}
    We assume that each $f_i$ is $L$-smooth, i.e., for all $\xx, \yy \in \R^d$,
    \[
        \norm{\nabla f_i(\xx) - \nabla f_i(\yy)} \leq L \norm{\xx - \yy}.
    \]
\end{assumption}
Next we make the following heavy-tail assumption on the level of deviations of the component gradients $\nabla f_i$ from the full gradient $\nabla f$, at the \emph{optimum}. This is more commonly known as the bounded $q$-th centralized moment assumption~\footnote{A more precise term here is $q$-th order component-gradient heterogeneity, i.e., the average $q$-th power deviation of component gradients from the full gradient at the optimum. This quantity equals the $q$-th centralized moment when component gradients are sampled uniformly at random, so we follow existing literature and use probabilistic terminology, though our main setting, incremental gradient, does not involve any randomness.} in the stochastic optimization literature.
\begin{assumption}
    \label{assumption:heavy-tail}
    We assume that there exist a constant $q\in (1,2]$ and $\sigma_\star^q$ such that %
    \[
        \frac{1}{n} \sum_{i=0}^{n-1} \norm{\nabla f_i(\xx^\star) - \nabla f(\xx^\star)}^q \leq \sigma_\star^q.
    \]
\end{assumption}
When $q=2$, this recovers the standard bounded variance assumption. Throughout the paper, we always write $p\geq 2$ as the conjugate exponent of $q$, i.e., $\frac{1}{p} + \frac{1}{q} = 1$.

We always write $\beta_f(\xx,\yy)$ to denote the Bregman divergence of $f$ between $\xx$ and $\yy$, centered at $\xx$, i.e.
\[
    \beta_f(\xx,\yy) \eqdef f(\yy) - f(\xx) - \inp{\nabla f(\xx)}{\yy-\xx}.
\]

\section{Warmup: dual averaging power-prox for \algname{SGD}}
\label{sec:warmup}
\begin{algorithm}[tb]
    \caption{Dual Averaging Power-Prox for \algname{SGD}}
    \label{alg:da-power-prox-sgd}
    \begin{algorithmic}[1]
        \State \textbf{Input:} $\xx_0$ and $\gamma,\lambda\in\R_+$. 
        \For{$t = 0,1,\dots$}
            \State Obtain $\gg_t \eqdef \nabla f_{i_t}(\xx_t)$, where $i_t$ is independently sampled from $\{0,\ldots,n-1\}$ uniformly at random.
            \State $\xx_{t+1} =\argmin_{\xx}\left[ 
                \sum_{s=0}^{t} \big(f(\xx_s)+\inp{\gg_s}{\xx-\xx_s}+\psi(\xx)\big)
                +
                \frac{\gamma}{2} \norm{\xx - \xx_0}^2+\frac{\lambda}{p}\norm{\xx-\xx_0}^p\right],
                $
                \Statex \hspace{\algorithmicindent}where $\frac{1}{p}+\frac{1}{q} = 1$.
        \hfill
        \EndFor
    \end{algorithmic}
\end{algorithm}
Before we delve into our main method for incremental gradient methods, we first discuss the power-prox method for the basic \algname{SGD} method, where the (stochastic) gradient at each iteration is sampled among all $n$ component gradients uniformly at random and independently from the past. In \Cref{alg:da-power-prox-sgd}, we present the dual-averaging power-prox method in this simpler i.i.d. sampling setting. This dual-averaging formulation is distinct from the mirror-descent power-prox method studied by \citet{vural2022mirror}, and it will serve as the basis for our incremental-gradient method in \Cref{sec:incremental}.

The advantage of applying the dual averaging technique will become clearer in the next sections when we discuss the additional technical challenges with incremental gradients. Compared to the usual dual averaging method, \Cref{alg:da-power-prox-sgd} has two regularizers in the subproblem, namely the usual order~$2$ regularizer and an additional order~$p$ regularizer, where $p$ is the conjugate of $q$ in \Cref{assumption:heavy-tail}. Hence the name ``power-prox''. Intuitively speaking, the order~$2$ term provides the regularization with respect to the optimization landscape (i.e. the smoothness of the problem). The order $p$ term provides the regularization with respect to the noise. The gradient noise lies in the dual space and has a bounded $q$-th centralized moment; therefore, in the primal space, the regularization should be of order $p$. More concretely, the power-prox term induces the order $p$ uniform convexity terms in the descent analysis and leads to:

\begin{align*}
    & \sum_{t=0}^{T-1}\Eb{F(\xx_{t+1})-F^\star} + \frac{\gamma}{2}\sum_{t=0}^{T-1}\Eb{\norm{\xx_{t+1}-\xx_t}^2} + \frac{\lambda}{p2^{p-2}} \sum_{t=0}^{T-1}\Eb{\norm{\xx_{t+1}-\xx_t}^p}\\
    \leq & \frac{\gamma R_0^2}{2} + \frac{\lambda R_0^p}{p} + \sum_{t=0}^{T-1}\Eb{\inp{\gg_t-\nabla f(\xx_t)}{\xx_t-\xx_{t+1}}},
\end{align*}
where on the LHS the $\frac{\lambda}{p2^{p-2}}\Eb{\norm{\xx_{t+1}-\xx_t}^p}$ term comes from the order $p$ uniform convexity. On the RHS, the residual error $\Eb{\inp{\gg_t-\nabla f(\xx_t)}{\xx_t-\xx_{t+1}}}$ should clearly be separated into one noise part and one primal iterate distance part. Since \Cref{assumption:heavy-tail} only gives us a bound on the $q$-th moment of the noise, splitting the residual using Young's inequality naturally leads to $\Eb{\norm{\xx_{t+1}-\xx_t}^p}$, which matches the order $p$ uniform convexity term on the LHS and can be absorbed. This is the key intuition behind the power-prox term.

\begin{theorem}
    \label{thm:da-power-prox-sgd}
    Under \Cref{assumption:convexity,assumption:smoothness,assumption:heavy-tail}, if we choose $\gamma = \max\{8L, \sqrt{\frac{8L(F(\xx_0)-F^\star)}{R_0^2}}\}$, and $\lambda = \left(\frac{p\sigma_\star^qT}{2qR_0^p}\right)^{\frac{1}{q}}$ where $R_0\eqdef \norm{\xx^\star-\xx_0}$, then it takes at most
    \[
        T = \frac{16LR_0^2 + 4\sqrt{2L(F(\xx_0)-F^\star)R_0^2}}{\varepsilon} + \frac{16^{\frac{q}{q-1}}\sigma_\star^{\frac{q}{q-1}}R_0^{\frac{q}{q-1}}}{\varepsilon^{\frac{q}{q-1}}},
    \]
    iterations of \Cref{alg:da-power-prox-sgd} to achieve $\frac{1}{T}\sum_{t=0}^{T-1}  \Eb{F(\xx_{t+1}) - F(\xx^\star)} \leq \varepsilon$.
\end{theorem}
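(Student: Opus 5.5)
The plan is to first establish the one-step inequality displayed just before \Cref{thm:da-power-prox-sgd}, and then bound the residual noise terms using \Cref{assumption:heavy-tail}. Write the dual-averaging objective at step $t$ as
\[
\Phi_t(\xx)=\sum_{s=0}^{t-1}\big(f(\xx_s)+\inp{\gg_s}{\xx-\xx_s}+\psi(\xx)\big)+\tfrac{\gamma}{2}\norm{\xx-\xx_0}^2+\tfrac{\lambda}{p}\norm{\xx-\xx_0}^p,
\]
so that $\xx_t=\argmin\Phi_t$. The $\frac{\gamma}{2}\norm{\cdot}^2$ part is $\gamma$-strongly convex. The $\frac{\lambda}{p}\norm{\cdot}^p$ part is uniformly convex of order $p$ with modulus $\frac{\lambda}{2^{p-2}}$, a standard fact for power functions. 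Hence
\[
\Phi_t(\xx)\ge\Phi_t(\xx_t)+\tfrac{\gamma}{2}\norm{\xx-\xx_t}^2+\tfrac{\lambda}{p2^{p-2}}\norm{\xx-\xx_t}^p .
\]
Applying this to $\Phi_{t+1}$ at the point $\xx_t$ and to $\Phi_t$ at the point $\xx_{t+1}$, telescoping $\Phi_{t+1}(\xx_{t+1})-\Phi_t(\xx_t)$, and comparing $\Phi_T(\xx_T)\le\Phi_T(\xx^\star)$ gives the standard dual-averaging estimate. By convexity, $f(\xx_s)+\inp{\nabla f(\xx_s)}{\xx^\star-\xx_s}\le f(\xx^\star)$, so the $\xx^\star$ side is controlled by $TF^\star+\frac{\gamma R_0^2}{2}+\frac{\lambda R_0^p}{p}$ plus the martingale term $\sum_s\inp{\gg_s-\nabla f(\xx_s)}{\xx^\star-\xx_s}$, which has zero expectation because $\xx_s$ is independent of $i_s$.

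To obtain $F(\xx_{t+1})$ rather than $F(\xx_t)$ on the left, I use $L$-smoothness of $f$: $f(\xx_{t+1})\le f(\xx_t)+\inp{\nabla f(\xx_t)}{\xx_{t+1}-\xx_t}+\frac{L}{2}\norm{\xx_{t+1}-\xx_t}^2$. This turns the linearization at $\xx_{t+1}$ into $F(\xx_{t+1})$ at the cost of $\frac L2\norm{\xx_{t+1}-\xx_t}^2$, which is absorbed by half of the $\gamma$ term since $\gamma\ge 8L$. The remaining piece is exactly the displayed inequality with residual $\inp{\gg_t-\nabla f(\xx_t)}{\xx_t-\xx_{t+1}}$. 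I treat the $\psi(\xx)$ appearing in each summand, rather than the usual $(t+1)\psi$ handling, carefully, but this is routine.

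Next I bound the residual. I split
\[
\gg_t-\nabla f(\xx_t)=\big[\nabla f_{i_t}(\xx_t)-\nabla f_{i_t}(\xx^\star)-(\nabla f(\xx_t)-\nabla f(\xx^\star))\big]+\big[\nabla f_{i_t}(\xx^\star)-\nabla f(\xx^\star)\big].
\]
The first bracket is handled by Young's inequality with the quadratic term: its contribution is at most $\frac{1}{\gamma}\Eb{\norm{\nabla f_{i_t}(\xx_t)-\nabla f_{i_t}(\xx^\star)}^2}+\frac{\gamma}{4}\norm{\xx_{t+1}-\xx_t}^2$. Using smoothness and convexity of the components, $\Eb{\norm{\nabla f_{i}(\xx_t)-\nabla f_i(\xx^\star)}^2}\le 2L\beta_f(\xx^\star,\xx_t)\le 2L(F(\xx_t)-F^\star)$, where the last step uses the optimality condition for $\psi$. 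With $\gamma\ge 8L$ this costs $\frac14(F(\xx_t)-F^\star)$, which is absorbed into the left side after an index shift. The shift leaves the $F(\xx_0)-F^\star$ term; balancing it against $\gamma R_0^2$ is what produces the second option $\sqrt{8L(F(\xx_0)-F^\star)/R_0^2}$ in the choice of $\gamma$.

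The second bracket is handled by Young's inequality with exponents $(q,p)$:
\[
\inp{\xi}{\Delta}\le \tfrac{\lambda}{p2^{p-2}}\norm{\Delta}^p+c_p\,\lambda^{-q/p}\norm{\xi}^q .
\]
This absorbs the $\norm{\Delta}^p$ term and leaves $c\,\lambda^{1-q}\sigma_\star^q$ per step in expectation, by \Cref{assumption:heavy-tail}. Summing over $T$ steps gives
\[
\tfrac1T\sum_t\Eb{F(\xx_{t+1})-F^\star}\lesssim\frac{\gamma R_0^2+(F(\xx_0)-F^\star)}{T}+\frac{\lambda R_0^p}{pT}+c\,\lambda^{1-q}\sigma_\star^q .
\]
The stated $\lambda$ balances the last two terms, yielding a term of order $\sigma_\star R_0T^{-1/p}=\sigma_\star R_0 T^{-(q-1)/q}$. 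Setting each part to at most $\varepsilon/2$ then gives the stated $T$.

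I expect the main obstacle to be bookkeeping of constants: matching the $2^{p-2}$ uniform convexity modulus and the Young constant to get exactly $16^{q/(q-1)}$, and handling the index shift between $F(\xx_t)$ and $F(\xx_{t+1})$. Verifying the uniform convexity constant of $\frac1p\norm{\cdot}^p$ is the step I would check first.
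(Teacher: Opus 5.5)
Your proposal is correct and follows essentially the same route as the paper's proof: dual-averaging telescoping with the order-$2$ and order-$p$ uniform-convexity terms, then smoothness of $f$ to pass to $F(\xx_{t+1})$, then unbiasedness to reduce the error to $\inp{\gg_t-\nabla f(\xx_t)}{\xx_t-\xx_{t+1}}$, and finally the same split at $\xx^\star$ into a Bregman part (Young against the quadratic term, \Cref{fact:bregman-composite}, index shift) and a heavy-tail part ($(q,p)$-Young against the power-prox term). Your $\frac{1}{\gamma}\norm{\cdot}^2+\frac{\gamma}{4}\norm{\cdot}^2$ split with the variance bound is slightly cleaner than the paper's $\frac{1}{2\gamma}/\frac{\gamma}{2}$ split, because it leaves room for the $\frac{L}{2}\norm{\xx_{t+1}-\xx_t}^2$ smoothness cost; moreover, once you keep the factor $\frac{2L}{\gamma}$ on $F(\xx_0)-F^\star$ (your summary display drops it), the stated $\gamma,\lambda$ give the bound $\frac{8LR_0^2+\sqrt{8L(F(\xx_0)-F^\star)R_0^2}}{T}+\frac{10}{3}\sigma_\star R_0T^{-(q-1)/q}$, from which the stated $T$, with its factor $16$, follows.
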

    We note that the $\frac{4\sqrt{2L(F(\xx_0)-F^\star)R_0^2}}{\varepsilon}$ term comes from the composite nature of the problem. It does not affect the asymptotic rate, and can be removed by an initialization step (we refer to \Cref{remark:main-convergence} for more details).

    Interestingly, in the stepsize selection here, we see a clear distinction between the roles of $\gamma$ and $\lambda$. $\gamma$ is selected to be the smoothness parameter $L$ (note that we can essentially bound $F(\xx_0)-F^\star$ via $LR_0^2$; see more discussion in \Cref{remark:main-convergence}), and is therefore only related to the smoothness of the problem and the optimization term $\cO(\frac{LR_0^2}{\varepsilon})$. $\lambda$, however, is determined by the noise level $\sigma_\star^q$, the initial distance $R_0$, and the number of iterations $T$. This is consistent with our intuition that the power-prox term is designed to directly regularize the noise, and therefore its strength should be determined by the noise level and the number of iterations. We will see in the next section how this distinction between the roles of $\gamma$ and $\lambda$ is challenged in the incremental gradient setting, which is crucial for showing that incremental gradient methods can attain better asymptotic rates than the \algname{SGD} method.

\section{Dual averaging power-prox for incremental gradients}
\label{sec:incremental}
\begin{algorithm}[tb]
    \caption{Dual Averaging Power-Prox for Incremental Gradient}
    \label{alg:da-power-prox-incremental}
    \begin{algorithmic}[1]
        \State \textbf{Input:} $\xx_0$ and $\gamma,\lambda\in\R_+$. 
        \For{$k = 0,1,\dots, K-1$ }
            \State $\xx^k\eqdef \xx_{kn}$
            \For{$i = 0,1,\dots, n-1$}
                \State $t = k \cdot n + i$
                \State Obtain $\gg_k^i \eqdef \gg_t = \nabla f_i(\xx_t)$. 
                \State $\xx_{k}^{i+1}\eqdef \xx_{t+1} =\argmin_{\xx}\Phi_k^i(\xx)$, where
                \State $\Phi_k^i(\xx)\eqdef
                  \begin{aligned}[t]
                  &\sum_{e=0}^{k-1}\sum_{j=0}^{n-1} \big(f(\xx^e)+ \inp{\gg_e^j}{\xx-\xx^e}+\psi(\xx)\big)  + \sum_{j=0}^{i} \big(f(\xx^k)+ \inp{\gg_k^j}{\xx-\xx^k}+\psi(\xx)\big) \\
                  &\quad + \frac{\gamma}{2} \norm{\xx - \xx_0}^2  + \frac{\lambda}{p} \norm{\xx-\xx_0}^p
                  \end{aligned}$
				\EndFor
		\EndFor
    \end{algorithmic}
\end{algorithm}
In this section, we consider the main setting of this paper, where the gradient at each iteration is obtained via a pass over the $n$ component functions in a fixed order, and the gradients of the component functions have a bounded $q$-th centralized moment at the optimum. We now develop the main algorithm of the paper: a dual-averaging power-prox method for incremental-gradient access, which we summarize in \Cref{alg:da-power-prox-incremental}. The algorithm runs in $K$ epochs, and each epoch consists of $n$ iterations. To facilitate understanding and analysis of the algorithm, we write $\xx^k=\xx_{kn}$ for the starting point of the $k$-th epoch, and $\xx_k^i$ for the $i$-th iterate within the $k$-th epoch, so that $\xx_k^0 = \xx^k$ and $\xx_k^n = \xx^{k+1}$. 

The algorithm maintains a sum of the linearized losses of all past epochs and the current epoch, and applies a proximal update at every iteration. The prox function is the same as in \Cref{alg:da-power-prox-sgd}, which consists of a quadratic term and a $p$-th power term. We note that in \Cref{alg:da-power-prox-incremental} we add the anchor points $\xx^e$ to the linearized losses and keep the (fixed) objective values $f(\xx^e)$. This gives us a more convenient form for the analysis but is irrelevant to the implementation and the actual updates of the algorithm. It is equivalent to solving the following subproblems at each iteration (we refer to \Cref{sec:implementation} for a brief discussion on solving the subproblems):
\begin{equation}
    \label{eq:subproblem-specific}
    \xx_{k}^{i+1}\eqdef \argmin_{\xx}\left[
                  \inp{\sum_{e=0}^{k-1}\sum_{j=0}^{n-1} \gg_e^j + \sum_{j=0}^{i} \gg_k^j}{\xx} + (kn+i+1) \psi(\xx) + \frac{\gamma}{2} \norm{\xx - \xx_0}^2  + \frac{\lambda}{p} \norm{\xx-\xx_0}^p
                  \right]
\end{equation}
In addition to the power-prox term which we discussed in the previous section, a key design choice of our algorithm is the use of the dual averaging technique. A key drawback in many of the earlier works on incremental gradient methods (for composite optimization) \citep{liu2024last,mishchenko2022proximal} is that they apply the proximal operation at the end of every epoch, and during one epoch the algorithm simply performs gradient steps without proximal operations. Such a design choice is not ideal, and somewhat unnatural, since the iterates can potentially step outside of the feasible region $\dom\psi$ during the epoch. This risks having iterates with potentially infinite objective values, or iterates for which the gradients are not even well-defined. 

At its core, maintaining simple intra-epoch gradient steps is a technical choice that makes the analysis work. The canonical way to analyze the convergence of incremental gradient methods (or shuffling methods in general) studies the progress of the algorithm in terms of the epoch-end iterates $\xx^k$. It relies on a joint analysis of the epoch-wide deviations of the iterates $\xx_k^i$ and the gradients $\gg_k^i$. This typically requires additive gradient accumulation within each epoch, which appears to conflict with performing proximal corrections at every iteration. Dual averaging is precisely what resolves this tension. It decouples \emph{how} gradients are accumulated from \emph{when} proximal updates are applied: gradients remain additive in the dual variable, while primal iterates are updated proximally at every iteration. As a result, our method keeps the iterate sequence in $\dom\psi$ throughout training without sacrificing the additive structure needed for epoch-wise reasoning. We also note that even when $\psi\equiv 0$, dual averaging is still crucial for us because our method applies the power-prox technique, which makes the updates naturally non-additive.

\subsection{Sketch of convergence analysis}
\label{sec:sketch-convergence-analysis}
Now we proceed to sketch the convergence analysis of \Cref{alg:da-power-prox-incremental}. We will present some of the key technical lemmas and the main convergence theorem, sketch some of the key proof insights, and defer the full proofs to \Cref{sec:missing-proofs-incremental}. We first give a descent statement in terms of the epoch-end iterates $\xx^k$, up to some error terms that are epoch-wise analogues of the typical (stochastic) dual averaging error terms.

\begin{restatable}{lemma}{EpochDescent}
    \label{lem:epoch-descent}
    Under \Cref{assumption:smoothness}, the iterates of \Cref{alg:da-power-prox-incremental} satisfies:
    \begin{equation}
        \label{eq:epoch-descent}
        \begin{aligned}
            &n\sum_{k=0}^{K-1} \big( F(\xx^{k+1}) -F^\star\big)+ \frac{\gamma-nL}{2}\sum_{k=0}^{K-1}\norm{\xx^{k+1}-\xx^k}^2 + \frac{\lambda}{p2^{p-2}}\sum_{k=0}^{K-1}\norm{\xx^{k+1}-\xx^k}^p\\
            \leq & \frac{\gamma R_0^2}{2} + \frac{\lambda R_0^p}{p} - \frac{\gamma R_K^2}{2} - \frac{\lambda R_K^p}{p2^{p-2}} + \sum_{k=0}^{K-1}\sum_{i=0}^{n-1}\big( \inp{\gg_k^i - \nabla f(\xx^k)}{\xx^\star-\xx^{k+1}}-\beta_f(\xx^k,\xx^\star)\big),
        \end{aligned}
    \end{equation} %
    where we write $R_0\eqdef \norm{\xx_0 - \xx^\star}$ and $R_K\eqdef \norm{\xx^K - \xx^\star}$. 
\end{restatable}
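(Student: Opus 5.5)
The plan is a standard dual-averaging telescoping argument, carried out at the level of epochs rather than single iterations. Write $\Phi_k \eqdef \Phi_k^{n-1}$ for the full end-of-epoch objective, so that $\xx^{k+1} = \argmin_{\xx}\Phi_k(\xx)$. Set $\Phi_{-1}(\xx) \eqdef \frac{\gamma}{2}\norm{\xx-\xx_0}^2 + \frac{\lambda}{p}\norm{\xx-\xx_0}^p$, whose minimizer is $\xx^0=\xx_0$ with $\Phi_{-1}(\xx^0)=0$. Then
\[
\Phi_k(\xx) = \Phi_{k-1}(\xx) + \sum_{i=0}^{n-1}\big(f(\xx^k)+\inp{\gg_k^i}{\xx-\xx^k}+\psi(\xx)\big).
\]
The intra-epoch iterates $\xx_k^i$ enter only through the gradients $\gg_k^i$. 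This is exactly the decoupling that dual averaging provides.

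\textbf{Step 1 (growth of $\Phi_k$ around its minimizer).} Each $\Phi_k$ is a convex function plus $\frac{\gamma}{2}\norm{\cdot-\xx_0}^2 + \frac{\lambda}{p}\norm{\cdot-\xx_0}^p$. I will use the following facts:
\begin{itemize}
\item the quadratic has Bregman divergence exactly $\frac{\gamma}{2}\norm{\yy-\xx}^2$;
\item $\frac{1}{p}\norm{\cdot}^p$ is uniformly convex of order $p$ with Bregman divergence at least $\frac{1}{p2^{p-2}}\norm{\yy-\xx}^p$ for $p\ge 2$ (the standard Nesterov-type estimate).
\end{itemize}
Combined with the optimality condition $\0\in\partial\Phi_k(\xx^{k+1})$, this gives for every $\xx$
\[
\Phi_k(\xx)\ge \Phi_k(\xx^{k+1}) + \frac{\gamma}{2}\norm{\xx-\xx^{k+1}}^2 + \frac{\lambda}{p2^{p-2}}\norm{\xx-\xx^{k+1}}^p .
\]
I expect this to be the only genuinely delicate ingredient, namely verifying (or citing) the constant $2^{-(p-2)}$ in the uniform convexity of the $p$-th power. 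The rest is bookkeeping.

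\textbf{Step 2 (lower chain).} Apply Step 1 to $\Phi_{k-1}$ at $\xx=\xx^{k+1}$, which gives
\[
\Phi_k(\xx^{k+1}) \ge \Phi_{k-1}(\xx^k) + \frac{\gamma}{2}\norm{\xx^{k+1}-\xx^k}^2+\frac{\lambda}{p2^{p-2}}\norm{\xx^{k+1}-\xx^k}^p + \sum_{i}\big(f(\xx^k)+\inp{\gg_k^i}{\xx^{k+1}-\xx^k}+\psi(\xx^{k+1})\big).
\]
In the last sum, split $\gg_k^i = \nabla f(\xx^k) + (\gg_k^i-\nabla f(\xx^k))$. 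Then use $L$-smoothness of $f$, which is inherited from \Cref{assumption:smoothness} by averaging, to get
\[
n\big(f(\xx^k)+\inp{\nabla f(\xx^k)}{\xx^{k+1}-\xx^k}\big)\ge nf(\xx^{k+1}) - \frac{nL}{2}\norm{\xx^{k+1}-\xx^k}^2 .
\]
This produces $nF(\xx^{k+1})$, the $\frac{\gamma-nL}{2}$ coefficient, and the residual $\sum_i\inp{\gg_k^i-\nabla f(\xx^k)}{\xx^{k+1}-\xx^k}$. Telescoping over $k=0,\dots,K-1$ from $\Phi_{-1}(\xx^0)=0$ bounds $\Phi_{K-1}(\xx^K)$ from below by the left-hand side of \eqref{eq:epoch-descent} (with $F^\star$ not yet subtracted) plus these residuals.

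\textbf{Step 3 (upper bound at $\xx^\star$).} Apply Step 1 to $\Phi_{K-1}$ at $\xx=\xx^\star$:
\[
\Phi_{K-1}(\xx^K)\le \Phi_{K-1}(\xx^\star)-\frac{\gamma}{2}R_K^2-\frac{\lambda}{p2^{p-2}}R_K^p .
\]
Expand $\Phi_{K-1}(\xx^\star)$. It equals $\frac{\gamma R_0^2}{2}+\frac{\lambda R_0^p}{p}$ plus the sum over $k,i$ of $f(\xx^k)+\inp{\gg_k^i}{\xx^\star-\xx^k}+\psi(\xx^\star)$. In each summand write
\[
f(\xx^k)+\inp{\nabla f(\xx^k)}{\xx^\star-\xx^k} = f(\xx^\star)-\beta_f(\xx^k,\xx^\star),
\]
so each summand becomes $F^\star-\beta_f(\xx^k,\xx^\star)+\inp{\gg_k^i-\nabla f(\xx^k)}{\xx^\star-\xx^k}$. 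Subtracting $nKF^\star$ from both bounds and moving the Step 2 residuals to the right merges the two noise inner products:
\[
\inp{\gg_k^i-\nabla f(\xx^k)}{\xx^\star-\xx^k}-\inp{\gg_k^i-\nabla f(\xx^k)}{\xx^{k+1}-\xx^k}=\inp{\gg_k^i-\nabla f(\xx^k)}{\xx^\star-\xx^{k+1}} .
\]
This yields exactly \eqref{eq:epoch-descent}.
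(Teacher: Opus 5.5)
Your proposal is correct and follows the paper's proof essentially step for step. It uses the same three ingredients: the uniform-convexity growth bound for $\Phi_k$ around $\xx^{k+1}$, the lower chain through $\Phi_{k-1}$ combined with smoothness of $f$, and the upper bound at $\xx^\star$ with the Bregman rewriting that merges the two noise inner products. Your bookkeeping is in fact slightly cleaner than the paper's. You take $\Phi_{-1}$ to be the regularizer and telescope to $\Phi_{K-1}(\xx^K)$, whereas the paper writes $\Phi_{-1}\equiv 0$ and $\Phi_K^\star$ where $\Phi_{K-1}$ is meant.
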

The above lemma is rather generic, in the sense that the resulting error terms $\inp{\gg_k^i - \nabla f(\xx^k)}{\xx^\star-\xx^{k+1}}-\beta_f(\xx^k,\xx^\star)$ are epoch-wise analogues of the typical error terms in the analysis of (stochastic) dual averaging methods, and we have not invoked any specific assumptions or algorithmic properties to bound them yet. We note that when proving the above lemma, we crucially preserved the Bregman divergence terms $\beta_f(\xx^k,\xx)$, instead of simply bounding them from below by $0$ using convexity as in most dual averaging analyses. This helps us control the error terms in a more fine-grained way and enables delicate epoch-wise cancellations that are crucial for the final convergence rates. We now upper bound the error terms via an epoch-wise drift bound $\sum_{i=0}^{n-1}\norm{\xx_k^i-\xx^k}^2$. Importantly, the upper bound does not involve any $\sigma_\star^q$ terms. These terms will only appear when we further upper bound the epoch-wise drifts.

\begin{restatable}{lemma}{ErrorBound}
    \label{lem:error-bound}
    Given \Cref{assumption:convexity,assumption:smoothness}, for any epoch $k$, we have:
    \begin{equation}
        \label{eq:error-bound}
        \sum_{i=0}^{n-1}\big( \inp{\gg_k^i - \nabla f(\xx^k)}{\xx^\star-\xx^{k+1}}-\beta_f(\xx^k,\xx^\star)\big) \leq \frac{Ln}{2}\norm{\xx^{k+1}-\xx^k}^2 + \frac{3L}{2}\sum_{i=0}^{n-1}\norm{\xx_k^i-\xx^k}^2.
    \end{equation} %
\end{restatable}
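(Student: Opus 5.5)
The plan is to work termwise in $i$ and reduce everything to smoothness and convexity of the individual $f_i$. First I rewrite the noise term using the finite-sum structure. Since $\sum_{i=0}^{n-1}\nabla f_i(\xx^k) = n\nabla f(\xx^k)$ and $\gg_k^i = \nabla f_i(\xx_k^i)$, the left-hand side of \eqref{eq:error-bound} equals
\[
\sum_{i=0}^{n-1}\Big(\inp{\nabla f_i(\xx_k^i)-\nabla f_i(\xx^k)}{\xx^\star-\xx^{k+1}} - \beta_{f_i}(\xx^k,\xx^\star)\Big).
\]
Here I use that $\beta_f = \frac1n\sum_i \beta_{f_i}$, so $n\beta_f(\xx^k,\xx^\star)=\sum_i\beta_{f_i}(\xx^k,\xx^\star)$. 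It then suffices to bound each summand by $\frac{L}{2}\norm{\xx^{k+1}-\xx^k}^2 + L\norm{\xx_k^i-\xx^k}^2$. This is slightly stronger than needed, since $L\le \frac{3L}{2}$.

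Next I split $\xx^\star-\xx^{k+1} = (\xx^\star-\xx^k) + (\xx^k-\xx^{k+1})$, which gives two pieces to handle.

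The piece paired with $\xx^k-\xx^{k+1}$ I bound by Young's inequality:
\[
\inp{\nabla f_i(\xx_k^i)-\nabla f_i(\xx^k)}{\xx^k-\xx^{k+1}} \le \tfrac{1}{2L}\norm{\nabla f_i(\xx_k^i)-\nabla f_i(\xx^k)}^2 + \tfrac{L}{2}\norm{\xx^k-\xx^{k+1}}^2.
\]
By \Cref{assumption:smoothness}, the right-hand side is at most $\frac L2\norm{\xx_k^i-\xx^k}^2 + \frac L2\norm{\xx^{k+1}-\xx^k}^2$. Summed over $i$, this produces exactly the $\frac{Ln}{2}\norm{\xx^{k+1}-\xx^k}^2$ term.

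The piece paired with $\xx^\star-\xx^k$ is where the retained Bregman term $-\beta_{f_i}(\xx^k,\xx^\star)$ is used. I expand both inner products through the definition of $\beta$. First,
\[
\inp{\nabla f_i(\xx_k^i)}{\xx^\star-\xx_k^i} = f_i(\xx^\star)-f_i(\xx_k^i)-\beta_{f_i}(\xx_k^i,\xx^\star),
\]
and
\[
-\inp{\nabla f_i(\xx^k)}{\xx^\star-\xx^k}-\beta_{f_i}(\xx^k,\xx^\star) = f_i(\xx^k)-f_i(\xx^\star).
\]
Adding the leftover $\inp{\nabla f_i(\xx_k^i)}{\xx_k^i-\xx^k}$, the $f_i(\xx^\star)$ terms cancel and the piece collapses to the three-point identity
\[
\beta_{f_i}(\xx_k^i,\xx^k)-\beta_{f_i}(\xx_k^i,\xx^\star).
\]
By convexity (\Cref{assumption:convexity}) the second Bregman term is nonnegative and can be dropped. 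By $L$-smoothness the first is at most $\frac L2\norm{\xx_k^i-\xx^k}^2$. Combining the two pieces and summing over $i$ gives \eqref{eq:error-bound}, with coefficient $L\le\frac{3L}{2}$ on the drift term.

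The main obstacle is the choice of splitting point. A naive route compares $\xx^{k+1}$ directly with $\xx_k^i$ and then applies $\norm{a+b}^2\le2\norm a^2+2\norm b^2$. That route loses a factor $2$ on $\norm{\xx^{k+1}-\xx^k}^2$, giving $Ln$ instead of $\frac{Ln}{2}$. Such a loss would break the absorption against the $\frac{\gamma-nL}{2}$ term of \Cref{lem:epoch-descent}. Anchoring at $\xx^k$ and spending the term $-\beta_{f_i}(\xx^k,\xx^\star)$ on the three-point identity avoids this loss.
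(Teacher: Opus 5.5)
Your proof is correct and follows essentially the paper's route: the same split $\xx^\star-\xx^{k+1}=(\xx^\star-\xx^k)+(\xx^k-\xx^{k+1})$, the same Young-plus-smoothness bound on the $\xx^k-\xx^{k+1}$ piece, and convexity plus smoothness on the $\xx^\star-\xx^k$ piece. The paper does the finite-sum cancellations at the end rather than up front, and it applies convexity a second time, which turns your $\beta_{f_i}(\xx_k^i,\xx^k)$ into the symmetrized divergence $\inp{\nabla f_i(\xx_k^i)-\nabla f_i(\xx^k)}{\xx_k^i-\xx^k}\le L\norm{\xx_k^i-\xx^k}^2$. Your exact three-point identity only needs $\beta_{f_i}(\xx_k^i,\xx^k)\le\frac{L}{2}\norm{\xx_k^i-\xx^k}^2$, so you get the sharper coefficient $L$ instead of $\frac{3L}{2}$. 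Your closing remark is overstated, though: under $\gamma\ge 4nL$ a coefficient $Ln$ on $\norm{\xx^{k+1}-\xx^k}^2$ would still be absorbed, only with worse constants.
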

Since $\norm{\xx^{k+1}-\xx^k}^2$ can be easily canceled with the left-hand side of \Cref{eq:epoch-descent}, the main technical challenge that remains is to upper bound the epoch-wise drifts $\sum_{i=0}^{n-1}\norm{\xx_k^i-\xx^k}^2$. This is where our power-prox term comes into play. We will see that the power-prox term allows us to specifically regulate the noise induced by the gradients. 

\begin{restatable}{lemma}{DriftBound}
    \label{lem:drift-bound}
    Given \Cref{assumption:convexity,assumption:smoothness,assumption:heavy-tail}, assuming that $\gamma \geq 4nL$, we have:
    \begin{equation}
        \label{eq:drift-bound}
        \sum_{i=0}^{n-1}\norm{\xx_k^i-\xx^k}^2 \leq \frac{2n^2}{\gamma}\big(F(\xx^k)-F^\star\big) + \frac{2n^{q+1}\sigma_\star^q}{q\gamma\lambda^{q-1}}
    \end{equation}
\end{restatable}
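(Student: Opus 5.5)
We bound each in-epoch displacement $\norm{\xx_k^i-\xx^k}$ by comparing the minimizers of $\Phi_k^{i-1}$ and $\Phi_k^{-1}$, where $\Phi_k^{-1}$ denotes the objective whose minimizer is $\xx^k$, i.e.\ the sum up to the end of epoch $k-1$. The two objectives differ only by the linear-plus-composite term $\sum_{j=0}^{i-1}\big(\inp{\gg_k^j}{\xx}+\psi(\xx)\big)$. The regularizer $\frac{\gamma}{2}\norm{\cdot-\xx_0}^2+\frac{\lambda}{p}\norm{\cdot-\xx_0}^p$ is $\gamma$-strongly convex. It is also uniformly convex of order $p$, with modulus of order $\lambda/2^{p-2}$. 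Hence $\Phi_k^{i-1}$ inherits both properties.

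Since $\xx_k^i$ minimizes $\Phi_k^{i-1}$ and $\xx^k$ minimizes $\Phi_k^{-1}$, we evaluate each objective at the other point and add the two inequalities. This gives
\[
\gamma\norm{\xx_k^i-\xx^k}^2+c_p\lambda\norm{\xx_k^i-\xx^k}^p\le \sum_{j=0}^{i-1}\Big(\inp{\gg_k^j}{\xx^k-\xx_k^i}+\psi(\xx^k)-\psi(\xx_k^i)\Big).
\]
We then use the optimality of $\xx^\star$, choosing a subgradient $\nabla f(\xx^\star)+\mathbf{s}^\star=0$ with $\mathbf{s}^\star\in\partial\psi(\xx^\star)$, and decompose
\[
\gg_k^j=\big(\nabla f_j(\xx_k^j)-\nabla f_j(\xx^\star)\big)+\big(\nabla f_j(\xx^\star)-\nabla f(\xx^\star)\big)-\mathbf{s}^\star .
\]
The $-\mathbf{s}^\star$ piece pairs with the $\psi$ difference. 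The noise piece $\mathbf{e}_j\eqdef\nabla f_j(\xx^\star)-\nabla f(\xx^\star)$ is handled by Young's inequality with exponents $(q,p)$:
\[
\inp{\mathbf{e}_j}{\xx^k-\xx_k^i}\le \frac{\norm{\mathbf{e}_j}^q}{q(\epsilon\lambda)^{q-1}}+\frac{\epsilon\lambda}{p}\norm{\xx^k-\xx_k^i}^p .
\]
Summed over $j<i\le n$, the $p$-power part is absorbed by the uniform convexity term once $\epsilon\sim 1/n$. The remaining part is at most $\frac{n^{q-1}}{q\lambda^{q-1}}\sum_j\norm{\mathbf{e}_j}^q\le \frac{n^{q}\sigma_\star^q}{q\lambda^{q-1}}$ by \Cref{assumption:heavy-tail}. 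Dividing by $\gamma$ and summing over $i$ produces the factor $n^{q+1}/(q\gamma\lambda^{q-1})$.

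The smooth piece $\nabla f_j(\xx_k^j)-\nabla f_j(\xx^\star)$ is split as $\big(\nabla f_j(\xx_k^j)-\nabla f_j(\xx^k)\big)+\big(\nabla f_j(\xx^k)-\nabla f_j(\xx^\star)\big)$. The first part is bounded by $L\norm{\xx_k^j-\xx^k}$, which is a drift term. The second part is controlled by co-coercivity, $\norm{\nabla f_j(\xx^k)-\nabla f_j(\xx^\star)}^2\le 2L\beta_{f_j}(\xx^\star,\xx^k)$. Averaged over $j$, this gives $\beta_f(\xx^\star,\xx^k)\le F(\xx^k)-F^\star$ via the optimality condition. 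Applying Young's inequality with weight $\gamma$ then yields the bound
\[
\frac{\gamma}{2}\norm{\xx_k^i-\xx^k}^2\lesssim \frac{nL}{\gamma}\cdot n\big(F(\xx^k)-F^\star\big)+\frac{L^2n}{\gamma}\sum_{j}\norm{\xx_k^j-\xx^k}^2+\text{noise}.
\]
Summing over $i$ produces a self-referential drift term with coefficient of order $n^2L^2/\gamma^2\le 1/16$ when $\gamma\ge 4nL$, and this term is absorbed into the left side. Because $nL/\gamma\le 1/4$, the function-gap term turns into $\frac{2n^2}{\gamma}(F(\xx^k)-F^\star)$.

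The main obstacle is the composite term. Since $\psi(\xx^k)-\psi(\xx_k^i)+\inp{\mathbf{s}^\star}{\xx_k^i-\xx^k}$ is not sign-definite, we must turn it into a function-gap term. We would combine it with the $\nabla f(\xx^\star)$ part to form
\[
\big[F(\xx^k)-F^\star\big]-\big[F(\xx_k^i)-F^\star\big]
\]
plus Bregman terms. This uses $\beta_f(\xx^\star,\xx_k^i)\ge0$ and the smoothness bound $\beta_f(\xx^\star,\xx^k)\le F(\xx^k)-F^\star$. The negative $F(\xx_k^i)-F^\star\ge0$ term is then dropped.

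We expect the delicate step to be arranging the constants, so that exactly $2n^2/\gamma$ and $2n^{q+1}/(q\gamma\lambda^{q-1})$ emerge, with half of each uniform convexity term reserved for absorption.
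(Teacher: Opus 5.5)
Your argument is sound in structure and takes a genuinely different route from the paper's.

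\textbf{How the two routes differ.} The paper first applies the descent lemma for $f$ at $\xx^k$. This turns the linear terms into $i\big(F(\xx^k)-F(\xx_k^i)\big)$ at the price of $-\tfrac{iL}{2}\norm{\xx_k^i-\xx^k}^2$, and the paper then drops $F(\xx_k^i)\ge F^\star$. It then splits $\gg_k^j-\nabla f(\xx^k)$ into four pieces, including $\nabla f(\xx^\star)-\nabla f(\xx^k)$, and bounds each product separately by Cauchy--Schwarz and an $L$-weighted AM--GM. The resulting drift terms carry coefficients of order $nL$ and are absorbed linearly by $\gamma\ge 4nL$; $\psi$ is touched only through \Cref{fact:bregman-composite}.

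You instead compare the two minimizers directly. You recenter the gradients at $\xx^\star$ via $\nabla f(\xx^\star)+\mathbf{s}^\star=0$, so the $\psi$-difference is handled explicitly. You then apply one Young inequality with weight $\gamma$ to the aggregated vector, so the drift returns with coefficient of order $n^2L^2/\gamma$ and is absorbed quadratically in $nL/\gamma$. The heavy-tail piece is treated the same way in both proofs. Your version avoids the descent-lemma loss and the extra $\beta_f(\xx^\star,\xx^k)$ term, and it yields a better function-gap constant.

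\textbf{The composite step needs a sign fix.} We have
\[
\psi(\xx^k)-\psi(\xx_k^i)+\inp{\mathbf{s}^\star}{\xx_k^i-\xx^k}=\big[F(\xx^k)-F^\star\big]-\big[F(\xx_k^i)-F^\star\big]-\beta_f(\xx^\star,\xx^k)+\beta_f(\xx^\star,\xx_k^i),
\]
so $\beta_f(\xx^\star,\xx_k^i)$ enters with a plus sign, and its nonnegativity does not help. What you need is $F(\xx_k^i)-F^\star\ge\beta_f(\xx^\star,\xx_k^i)$, i.e. \Cref{fact:bregman-composite}. More directly, the subgradient inequality $\psi(\xx_k^i)\ge\psi(\xx^\star)+\inp{\mathbf{s}^\star}{\xx_k^i-\xx^\star}$ bounds the term by $F(\xx^k)-F^\star-\beta_f(\xx^\star,\xx^k)$.

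Keep that $-\beta_f(\xx^\star,\xx^k)$: it cancels most of the $\beta_f$ produced by co-coercivity. Also, $\beta_f(\xx^\star,\xx^k)\le F(\xx^k)-F^\star$ follows from convexity of $\psi$ plus optimality, not from smoothness.

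\textbf{The choice of $\epsilon$ needs a $p$-dependent factor.} Absorbing $\frac{i\epsilon\lambda}{p}\norm{\xx_k^i-\xx^k}^p$ into $\frac{2\lambda}{p2^{p-2}}\norm{\xx_k^i-\xx^k}^p$ requires $\epsilon\le 2^{3-p}/n$, not $\epsilon=1/n$, once $p>3$. The noise remainder then picks up a factor $2^{(p-3)(q-1)}=2^{3-2q}$.

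\textbf{Resulting constants.} With these fixes, your weights give
\[
\sum_{i=0}^{n-1}\norm{\xx_k^i-\xx^k}^2\le\frac{8n^2}{7\gamma}\big(F(\xx^k)-F^\star\big)+\frac{2^{7-2q}}{7}\cdot\frac{n^{q+1}\sigma_\star^q}{q\gamma\lambda^{q-1}}.
\]
This is the lemma up to an absolute constant on the noise term; that constant exceeds $2$ for $q$ near $1$. Do not expect to land exactly on the stated $2$'s. In the paper's final rearrangement the coefficient should be $\frac{2\gamma-nL}{2}-\frac{3nL}{2}=\gamma-2nL$, not $2(\gamma-2nL)$. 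So the paper's own argument actually yields constants $4$ and $4$.
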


\begin{remark}
    Importantly, since the noise $\sigma_\star^q$ is introduced only via the epoch-wise drifts, it is additionally regularized by a $\nicefrac{1}{\gamma}$ factor. This demonstrates a crucial difference between the usual \algname{SGD} and the incremental gradient method: for \algname{SGD}, the noise level is associated only with $\lambda$, the power-prox parameter, while for incremental gradients, both $\gamma$ and $\lambda$ play a role in regularizing the noise, which results in better asymptotic rates.
\end{remark}

\begin{proof}[Proof sketch]

    Via a telescoping argument, we can get:
    \begin{align*}
        \frac{2\gamma -iL}{2} \norm{\xx_k^i-\xx^k}^2 + \frac{\lambda}{p2^{p-1}}\norm{\xx_k^i-\xx^k}^p &\leq i\big(F(\xx^k)-F(\xx_k^i)\big) +  \sum_{j=0}^{i-1} \inp{\gg_k^j - \nabla f(\xx^k)}{\xx^k-\xx_k^i}\\
        &\leq i\big(F(\xx^k)-F^\star\big) +  \sum_{j=0}^{i-1} \inp{\gg_k^j - \nabla f(\xx^k)}{\xx^k-\xx_k^i}
    \end{align*}
    Therefore, we need to upper bound $\sum_{j=0}^{i-1} \inp{\gg_k^j - \nabla f(\xx^k)}{\xx^k-\xx_k^i}$. This is where it becomes clear why the power-prox term is crucial. It is clear that the gradient error $\gg_k^j - \nabla f(\xx^k)$, after some possible manipulations, has to be bounded via some Cauchy-Schwarz type inequalities. Since the noise is heavy-tailed with power $q$, Cauchy-Schwarz will raise the drifts $\norm{\xx_k^i-\xx^k}$ to the conjugate power $p$. This will then be nicely canceled with the power-prox induced terms. We have:
    \begin{align*}
        \inp{\gg_k^j - \nabla f(\xx^k)}{\xx^k-\xx_k^i} &= \inp{\nabla f_j(\xx_k^j) -\nabla f_j(\xx^k) +\nabla f_j(\xx^k) -\nabla f_j(\xx^\star) + \nabla f(\xx^\star) -\nabla f(\xx^k)}{\xx^k-\xx_k^i}\\
        &\quad + \inp{\nabla f_j(\xx^\star) -\nabla f(\xx^\star)}{\xx^k-\xx_k^i}\\
        &\overset{(iii)}{\leq} \norm{\nabla f_j(\xx_k^j) -\nabla f_j(\xx^k)}\norm{\xx_k^i-\xx^k} + \norm{\nabla f_j(\xx^k) -\nabla f_j(\xx^\star)}\norm{\xx_k^i-\xx^k} \\
        &\quad + \norm{\nabla f(\xx^\star) -\nabla f(\xx^k)}\norm{\xx_k^i-\xx^k} + \norm{\nabla f_j(\xx^\star) -\nabla f(\xx^\star)}\norm{\xx_k^i-\xx^k}\\
        &\overset{(iv)}{\leq} \frac{L}{2}\norm{\xx_k^j -\xx^k}^2 + \frac{3L}{2}\norm{\xx_k^i-\xx^k}^2 + \beta_{f_j}(\xx^\star,\xx^k) + \beta_{f}(\xx^\star,\xx^k) \\
        &\quad + \frac{a^q\norm{\nabla f_j(\xx^\star)-\nabla f(\xx^\star)}^q}{q} + \frac{\norm{\xx_k^i-\xx^k}^p}{pa^p},
    \end{align*}
    where in $(iii)$ we used the Cauchy-Schwarz inequality, and in $(iv)$ we used \Cref{assumption:convexity,assumption:smoothness} and Young's inequality (for any $a>0$). The value of $a$ will be chosen later.

    Now summing from $j=0$ to $i-1$ and from $i=0$ to $n-1$, we have:
    \begin{align*}
        &\sum_{i=0}^{n-1}\sum_{j=0}^{i-1}\inp{\gg_k^j - \nabla f(\xx^k)}{\xx^k-\xx_k^i}\\
        \leq & \frac{3(n-1)L}{2}\sum_{i=0}^{n-1} \norm{\xx_k^i-\xx^k}^2 + \frac{1}{pa^p}\sum_{i=0}^{n-1}i\norm{\xx_k^i-\xx^k}^p + \sum_{i=0}^{n-1}(n-1-i)\beta_{f_i}(\xx^\star,\xx^k) \\
        & + \frac{n(n-1)}{2}\beta_f(\xx^\star,\xx^k)+ \frac{a^q}{q}\sum_{i=0}^{n-1}(n-1-i)\norm{\nabla f_i(\xx^\star)-\nabla f(\xx^\star)}^q\\
        \overset{(v)}{\leq} & \frac{3nL}{2}\sum_{i=0}^{n-1} \norm{\xx_k^i-\xx^k}^2  + \frac{n}{pa^p}\sum_{i=0}^{n-1}\norm{\xx_k^i-\xx^k}^p + \frac{3n^2}{2}\beta_f(\xx^\star,\xx^k) + \frac{a^qn}{q}\sum_{i=0}^{n-1}\norm{\nabla f_i(\xx^\star)-\nabla f(\xx^\star)}^q\\
        \overset{(vi)}{\leq} & \frac{3nL}{2}\sum_{i=0}^{n-1} \norm{\xx_k^i-\xx^k}^2  + \frac{n}{pa^p}\sum_{i=0}^{n-1}\norm{\xx_k^i-\xx^k}^p + \frac{3n^2}{2}\beta_f(\xx^\star,\xx^k) + \frac{a^qn^2\sigma_\star^q}{q}\\
        \overset{(vii)}{\leq} & \frac{3nL}{2}\sum_{i=0}^{n-1} \norm{\xx_k^i-\xx^k}^2  + \frac{n}{pa^p}\sum_{i=0}^{n-1}\norm{\xx_k^i-\xx^k}^p + \frac{3n^2}{2}(F(\xx^k)-F^\star) + \frac{a^qn^2\sigma_\star^q}{q}
    \end{align*}
    where in $(v)$ we used the fact that $\sum_{i=0}^{n-1}\beta_{f_i}(\xx^\star,\xx^k)=n\beta_f(\xx^\star,\xx^k)$ 
    , and in $(vi)$ we used \Cref{assumption:heavy-tail}. In $(vii)$ we used \Cref{fact:bregman-composite}.

    Therefore,
    \begin{align*}
        & \frac{2\gamma -nL}{2}\sum_{i=0}^{n-1}\norm{\xx_k^i-\xx^k}^2 + \frac{\lambda}{p2^{p-1}}\sum_{i=0}^{n-1}\norm{\xx_k^i-\xx^k}^p \\
        \leq & \frac{n^2}{2} \big(F(\xx^k) - F^\star\big) + \sum_{i=0}^{n-1}\sum_{j=0}^{i-1}\inp{\gg_k^j - \nabla f(\xx^k)}{\xx^k-\xx_k^i} \\
        \leq & \frac{3nL}{2}\sum_{i=0}^{n-1} \norm{\xx_k^i-\xx^k}^2  + \frac{n}{pa^p}\sum_{i=0}^{n-1}\norm{\xx_k^i-\xx^k}^p + 2n^2(F(\xx^k)-F^\star) + \frac{a^qn^2\sigma_\star^q}{q}.
    \end{align*}
    Rearranging the above, we have:
    \[
        2(\gamma-2nL)\sum_{i=0}^{n-1}\norm{\xx_k^i-\xx^k}^2 + \frac{1}{p} \big(\frac{\lambda}{2^{p-1}}-\frac{n}{a^p}\big)\norm{\xx_k^i-\xx^k}^p \leq 2n^2(F(\xx^k)-F^\star) + \frac{a^qn^2\sigma_\star^q}{q}.
    \]
    Now we can choose $a$ such that $a^p=\frac{n2^{p-1}}{\lambda}$, and assuming that $\gamma \geq 4nL$, we then get the desired result.
\end{proof}

We can now put everything together. In particular, plugging \Cref{lem:drift-bound} and \Cref{lem:error-bound} into \Cref{lem:epoch-descent}, and assuming that $\gamma \geq 4nL$, we have:
\begin{equation}
    \label{eq:final-descent}
    \begin{aligned}
        & n \sum_{k=0}^{K-1}\big(F(\xx^{k+1})-F^\star\big) + \gamma\sum_{k=0}^{K-1}\norm{\xx^{k+1}-\xx^k}^2 + \frac{4\lambda}{p2^{p-2}}\sum_{k=0}^{K-1}\norm{\xx^{k+1}-\xx^k}^p \\
        \leq & 2\gamma R_0^2 + \frac{4\lambda R_0^p}{p} - \frac{4\gamma R_K^2}{2} - \frac{4\lambda R_K^p}{p2^{p-2}} + \frac{12n^2L(F(\xx_0)-F^\star)}{\gamma} + \frac{12Kn^{q+1}L\sigma_\star^q}{q\gamma\lambda^{q-1}}.
    \end{aligned}
\end{equation}
To tune the stepsizes $\gamma$ and $\lambda$ jointly, we apply \Cref{lem:stepsize-gamma-lambda} and obtain the following main convergence result:
\begin{restatable}{theorem}{MainConvergence}
    \label{thm:main-convergence}
    Given \Cref{assumption:convexity,assumption:smoothness,assumption:heavy-tail}, by choosing:
    \[
        \lambda = \left(\frac{3pn^{q+1}L\sigma^q_{\star}K}{q\gamma R_0^q}\right)^{\frac{1}{q}},\quad \gamma = \max\left\{4nL, \sqrt{\frac{12n^2L(F(\xx_0)-F^\star)}{2R_0^2}}, \left(\frac{12\cdot 2^{q-2}Kn^{q+1}L\sigma_\star^q}{p^{q-1}R_0^q}\right)^{\frac{1}{q+1}}\right\}
    \]
    we have that it takes at most
    \begin{equation}
        \label{eq:main-convergence}
        K = \frac{24LR_0^2 + 48\sqrt{LR_0^2(F(\xx_0)-F^\star)}}{\varepsilon} + \frac{4^{\frac{2q+3}{q}}R_0^{\frac{q+2}{q}}L^{\frac{1}{q}}\sigma_\star}{p^{\frac{q-1}{q}}q^{\frac{1}{q}}\varepsilon^{\frac{q+1}{q}}}
    \end{equation}
    epochs of \Cref{alg:da-power-prox-incremental} such that $\frac{1}{K}\sum_{k=0}^{K-1}\big(F(\xx^{k+1})-F^\star\big) \leq \varepsilon$.
\end{restatable}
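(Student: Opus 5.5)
The plan is to start from the combined descent inequality \Cref{eq:final-descent}, which already encodes \Cref{lem:epoch-descent}, \Cref{lem:error-bound} and \Cref{lem:drift-bound} under the condition $\gamma\ge 4nL$. Since our choice of $\gamma$ is a maximum that includes $4nL$, that inequality applies. We then drop the nonnegative terms $\gamma\sum_k\norm{\xx^{k+1}-\xx^k}^2$ and $\frac{4\lambda}{p2^{p-2}}\sum_k\norm{\xx^{k+1}-\xx^k}^p$ on the left, as well as the negative $R_K$ terms on the right. Dividing by $nK$ then gives
\[
\frac{1}{K}\sum_{k=0}^{K-1}\big(F(\xx^{k+1})-F^\star\big)\le \frac{2\gamma R_0^2}{nK}+\frac{4\lambda R_0^p}{pnK}+\frac{12nL(F(\xx_0)-F^\star)}{\gamma K}+\frac{12n^{q}L\sigma_\star^q}{q\gamma\lambda^{q-1}}.
\]

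The first step is to balance the two $\lambda$-dependent terms for fixed $\gamma$: we balance $\frac{4\lambda R_0^p}{pnK}$ against $\frac{12n^qL\sigma_\star^q}{q\gamma\lambda^{q-1}}$. Equating them gives $\lambda^q\propto \frac{pn^{q+1}L\sigma_\star^qK}{q\gamma R_0^p}$, which matches the stated $\lambda$ up to the exponent of $R_0$. (I would double-check $R_0^q$ versus $R_0^p$ and adjust if needed; either way the argument is the same.) With this choice the sum of the two terms is of order $\frac{R_0^{p/q'}}{nK}\big(\frac{n^{q+1}L\sigma_\star^qK}{\gamma}\big)^{1/q}$, a quantity proportional to $\lambda R_0^p/(nK)$.

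The second step handles $\gamma$, which must now be balanced against three things: $\frac{2\gamma R_0^2}{nK}$ grows with $\gamma$, while $\frac{12nL(F(\xx_0)-F^\star)}{\gamma K}$ and the noise term, which scales like $\gamma^{-1/q}$, decrease. This is exactly the situation of \Cref{lem:stepsize-gamma-lambda}, which I would invoke. Taking $\gamma$ as the maximum of the three candidates bounds each decreasing term by a constant multiple of $\frac{\gamma R_0^2}{nK}$ evaluated at the corresponding candidate:
\begin{itemize}
\item the $\sqrt{\cdot}$ candidate handles the $F(\xx_0)-F^\star$ term;
\item the $K^{1/(q+1)}$ candidate handles the noise term, since equating $\gamma R_0^2$ with $\gamma^{-1/q}\cdot(\ldots)$ gives $\gamma^{(q+1)/q}\propto K^{1/q}$, i.e.\ $\gamma\propto K^{1/(q+1)}$.
\end{itemize}
Hence the whole bound is at most a constant times $\frac{\gamma R_0^2}{nK}$, which is at most a constant times the sum of the three candidates times $\frac{R_0^2}{nK}$.

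The last step requires each of these three contributions to be at most $\varepsilon/3$, and solves for $K$:
\begin{itemize}
\item the $4nL$ candidate gives $K\gtrsim LR_0^2/\varepsilon$;
\item the square-root candidate gives $K\gtrsim\sqrt{LR_0^2(F(\xx_0)-F^\star)}/\varepsilon$;
\item the noise candidate gives $\frac{R_0^2}{nK}\big(\frac{Kn^{q+1}L\sigma_\star^q}{R_0^q}\big)^{1/(q+1)}\le\varepsilon$, i.e.\ $K^{q/(q+1)}\gtrsim R_0^{(q+2)/(q+1)}(L\sigma_\star^q)^{1/(q+1)}/\varepsilon$.
\end{itemize}
The third condition yields $K\gtrsim R_0^{(q+2)/q}L^{1/q}\sigma_\star/\varepsilon^{(q+1)/q}$, and the factors of $n$ cancel exactly. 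This matches \Cref{eq:main-convergence}, and taking $K$ to be the sum of the three thresholds suffices. The main obstacle is bookkeeping: getting the constants such as $4^{(2q+3)/q}$ and the $p,q$ powers right, and checking that the condition on $\gamma$ is consistent with the max. The cleanest way is to let \Cref{lem:stepsize-gamma-lambda} absorb those constants rather than redo the balancing by hand.
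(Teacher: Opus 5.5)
Your proposal is correct and is essentially the paper's own proof. The paper divides \Cref{eq:final-descent} by $nK$ and invokes \Cref{lem:stepsize-gamma-lambda} with $E=4nL$, $A=2R_0^2/n$, $B=4R_0^p/(pn)$, $C=12n^qL\sigma_\star^q/q$ and $D=12nL(F(\xx_0)-F^\star)$, then requires each of the three resulting terms to be at most $\varepsilon/3$. Your suspicion about the exponent is justified: $\lambda=(CK/(\gamma B))^{1/q}$ with this $B$ puts $R_0^p$, not $R_0^q$, in $\lambda$, and the lemma reproduces the third candidate for $\gamma$ and the last term of \Cref{eq:main-convergence} only up to absolute constant factors, which is all that your argument (and the paper's) actually establishes.
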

\begin{remark}
    \label{remark:main-convergence}
    We first comment on the $\frac{48\sqrt{LR_0^2(F(\xx_0)-F^\star)}}{\varepsilon}$ term. This term is a common artifact in the analysis of composite optimization problems~\citep{gao2026composite}. When $\psi \equiv 0$, by \Cref{assumption:smoothness} we have $F(\xx_0)-F^\star \leq \frac{LR_0^2}{2}$, and the $\frac{48\sqrt{LR_0^2(F(\xx_0)-F^\star)}}{\varepsilon}$ term can be absorbed into the $\frac{24LR_0^2}{\varepsilon}$ term. When $\psi$ is non-trivial, this term can be absorbed via an additional (stochastic) gradient step at the start of the algorithm which reduces the initial suboptimality to the order of $LR_0^2$ plus some noise terms. In this work we omit this additional consideration as it does not affect the asymptotic rates, and we refer interested readers to \citet{gao2026composite} for more details.
\end{remark}

We note again that the above convergence rate concerns the \emph{number of epochs}. The total number of iterations (equivalently, gradient oracle calls) is $T=Kn$. In particular, the asymptotically dominant term for the number of iterations is $ \frac{4^{\frac{2q+3}{q}}nR_0^{\frac{q+2}{q}}L^{\frac{1}{q}}\sigma_\star}{p^{\frac{q-1}{q}}q^{\frac{1}{q}}\varepsilon^{\frac{q+1}{q}}}$. Comparing this with the $ \frac{2^{\frac{2-p}{q-1}}\sigma_\star^{\frac{q}{q-1}}R_0^{\frac{q}{q-1}}}{p\varepsilon^{\frac{q}{q-1}}}$ asymptotic rate of \algname{SGD}, we note that $\nicefrac{1}{\varepsilon^{\frac{q+1}{q}}}$ is a strictly better dependence on $\varepsilon$ than $\nicefrac{1}{\varepsilon^{\frac{q}{q-1}}}$ for any $q>1$.

\section{Experiments}
\label{sec:experiments}
\begin{wrapfigure}{r}{0.5\textwidth}           
      \centering
      \includegraphics[width=0.48\textwidth]{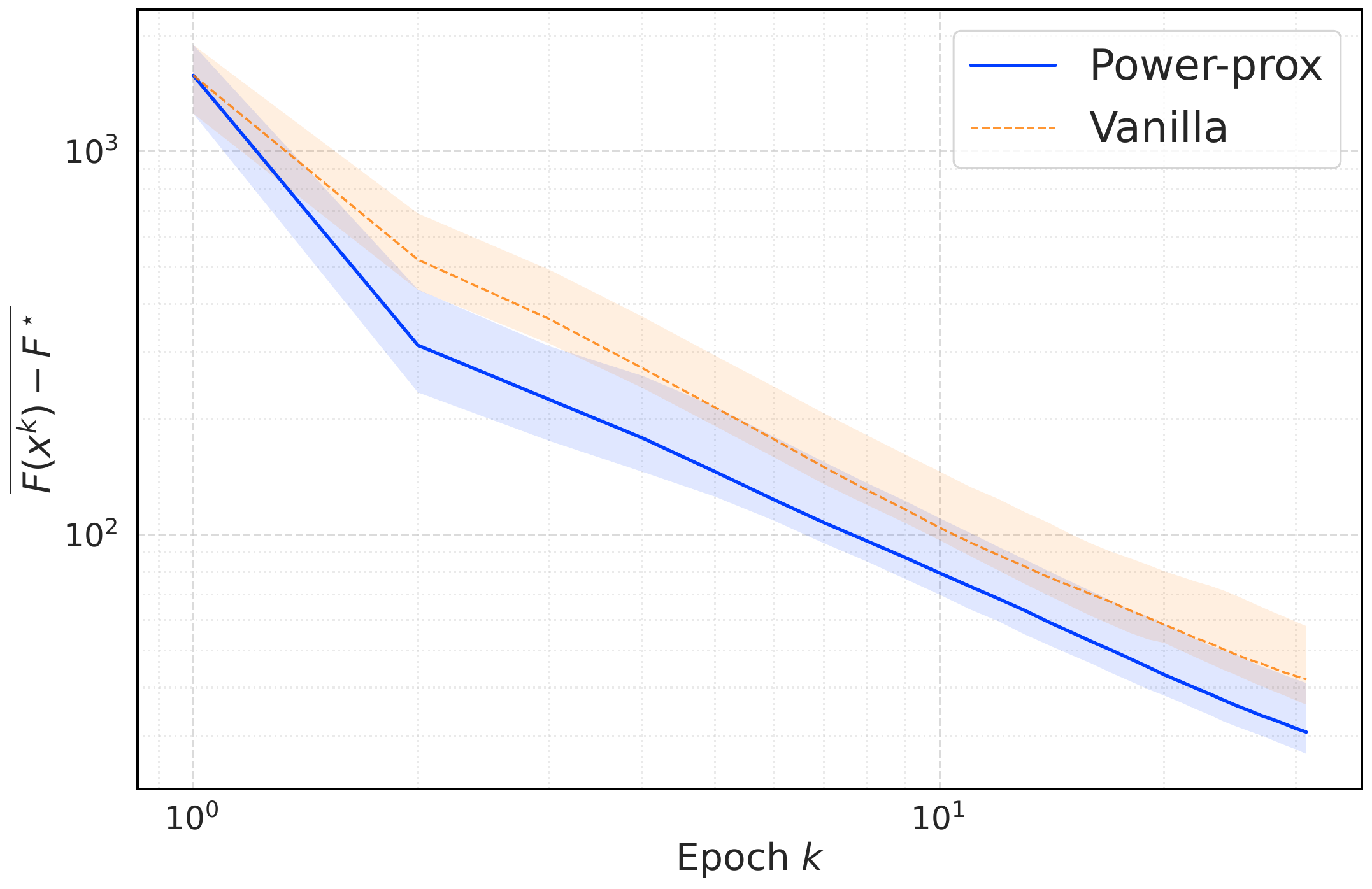}
      \caption{Synthetic least squares with heavy-tailed Pareto noise
  ($\alpha=1.5$,                          
      $n=500$, $d=50$).                                                         
      The overline denotes the running average over epochs $j=1,\dots,k$. Median
   and                                                                          
      IQR over $20$ seeds, per-seed best hyperparameters. Our method            
  consistently                                     
      outperforms the vanilla incremental gradient method.}
      \label{fig:synth_gap_avg}                                   \vspace{-0.2cm}          
  \end{wrapfigure}
In this section we present some preliminary numerical evaluations of our algorithm on a synthetic least-squares problem. We provide anonymized code for our experiments \href{https://anonymous.4open.science/r/synth-ls-htail-1698/README.md}{here}.  
Following \citet{hubler2025gradient}, we model      
  heavy-tailed gradient noise via the Pareto distribution. We construct a
  synthetic least-squares problem                                           
  $F(x) = \tfrac{1}{2}\sum_{i=1}^{n} (a_i^\top x - b_i)^2$
  from a standard Gaussian, row-normalized to unit norm, with each component
   $1$-smooth ($L = 1$). 
The target $x_{\text{target}}$ is drawn uniformly on   
  the unit sphere, and labels are generated as $b_i = a_i^\top x_{\text{target}}
   + \xi_i$, where $\xi_i = \sigma_i U_i^{-1/\alpha}$ is symmetric Pareto noise 
  with tail index $\alpha = 1.5$, $U_i \sim \text{Uniform}(0,1)$, and     
  random sign $\sigma_i \in {\pm 1}$. With $\alpha < 2$ the noise has infinite
  variance, but the $q$-th absolute moment is finite for any $q < \alpha$; we 
  take $q = 1.3$ throughout (so $p \approx 4.33$). To make the noise
  scale comparable across seeds, we rescale $\xi$ by a closed-form constant so
  that the empirical residual moment $\hat{\sigma}_\star^q := \tfrac{1}{n}\sum_i 
  |b_i - a_i^\top x^\star|^q$ matches the population value $\alpha/(\alpha - q)$ 
  exactly;
  
We compare \Cref{alg:da-power-prox-incremental} against vanilla dual averaging incremental gradient (i.e. $p=2$) over a run of $K=30$ epochs and $n=500$ samples. We repeat the experiment with $20$ random seeds.  We grid-search the optimal hyperparameters $\lambda, \gamma$ in the range $\{a \cdot 10^b : a \in \{1, 3, 5, 7\},\ b \in \{-3, -2, -1, 0, 1\}\}$ for each random seed and each method. We then report the median and interquartile range of these per-seed best curves across the $20$ seeds. The results are summarized in \Cref{fig:synth_gap_avg}. We see that \Cref{alg:da-power-prox-incremental} performs consistently better than the vanilla incremental gradient for all epochs and seeds, suggesting that our method outperforms the vanilla method in the heavy-tailed noise regime.

\section{Conclusion, limitation, and outlook}
\label{sec:conclusion}
In this paper, we investigated the theory of incremental methods under heavy-tailed noise, a setting that is underexplored yet central in modern ML training practice. We proposed a novel dual averaging power-prox method for incremental gradients and, to the best of our knowledge, provided the first rigorous convergence analysis for it under heavy-tailed noise. We showed that the power-prox method, a general technique that can address heavy-tailed noise, can effectively handle incremental updates. We also showed that dual averaging resolves a key technical challenge in the literature for composite objectives, where many existing works require the proximal operator to be applied only at the end of each epoch, risking undefined behavior during the epoch. Our analysis shows that even in the presence of heavy-tailed noise, our method can still achieve better asymptotic convergence rates than the corresponding \algname{SGD} method with i.i.d. sampling.

We believe that our work opens the door to a more comprehensive understanding of incremental gradient methods under heavy-tailed noise, and there are a few interesting directions for future work. First, it would be interesting to improve the asymptotically dominated term in the convergence rate. Like most existing works on incremental methods~\citep{mishchenko2022proximal,mishchenko2020random,liu2024last,josz2024proximal}, our analysis requires the stepsize $\gamma$ to depend on the epoch size $n$. This causes the optimization term in the convergence rate to depend on $n$ as well, which is not ideal. It appears to be a technical artifact of the epoch-wise nature of the analysis. It would be interesting to see whether this can be improved even in the bounded-variance setting. It would also be interesting to investigate whether we can obtain last-iterate convergence for our method. Our current analysis shows that an averaged iterate converges, but does not guarantee convergence of the last iterate. Finally, one can also consider extending our method to adjacent or more general settings, e.g., the non-convex case, generalized smoothness, or the distributed setting. We hope that our work can serve as a stepping stone for future research in this direction.

\section*{Acknowledgement}
The authors would like to thank Anton Rodomanov for explaining the power-prox framework and the helpful discussions.

\bibliographystyle{plainnat}
\bibliography{reference}

\appendix

\section{Auxiliary facts and technical lemmas}
\label{sec:auxiliary}
We first present a simple fact regarding the uniform convexity of the power function, which is a key technical tool in our analysis.

\begin{fact}
    \label{fact:uniform-convexity}
    The function $\frac{1}{p}\norm{\xx}^p$ where $p\geq 2$ is uniformly convex of order $p$ with constant $2^{2-p}$. 
    We say that a function $\phi$ is uniformly convex of order $p$ with constant $\mu$ if for any $\xx,\yy \in \R^d$, we have
    \begin{equation}
        \phi(\yy) \geq \phi(\xx) + \inp{\nabla \phi(\xx)}{\yy-\xx} + \frac{\mu}{p}\norm{\yy-\xx}^p.
    \end{equation}
\end{fact}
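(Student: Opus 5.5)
We need to prove Fact~\ref{fact:uniform-convexity}: for $\phi(\xx)=\frac1p\norm{\xx}^p$ with $p\ge 2$,
\[
\phi(\yy)\ge\phi(\xx)+\inp{\nabla\phi(\xx)}{\yy-\xx}+\frac{2^{2-p}}{p}\norm{\yy-\xx}^p .
\]
The plan is to reduce this to a one-dimensional inequality along the segment from $\xx$ to $\yy$, and then get the uniform-convexity lower bound from a lower bound on the directional second derivative.

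\textbf{Step 1: Taylor representation.} Set $\mathbf{h}=\yy-\xx$ and $g(\tau)=\phi(\xx+\tau\mathbf{h})$. For $p\ge2$, $\phi$ is $C^1$ with $\nabla\phi(\mathbf{z})=\norm{\mathbf{z}}^{p-2}\mathbf{z}$. It is $C^2$ away from the origin, with Hessian
\[
\nabla^2\phi(\mathbf{z})=\norm{\mathbf{z}}^{p-2}I+(p-2)\norm{\mathbf{z}}^{p-4}\mathbf{z}\mathbf{z}^\top\succeq\norm{\mathbf{z}}^{p-2}I .
\]
The segment meets the origin in at most one point, where $g'$ is still absolutely continuous. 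So we may write
\[
\phi(\yy)-\phi(\xx)-\inp{\nabla\phi(\xx)}{\mathbf{h}}=\int_0^1(1-\tau)\,g''(\tau)\,d\tau\ge\norm{\mathbf{h}}^2\int_0^1(1-\tau)\norm{\xx+\tau\mathbf{h}}^{p-2}\,d\tau .
\]

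\textbf{Step 2: lower bound on the integral.} This is the main obstacle. We need
\[
\int_0^1(1-\tau)\norm{\xx+\tau\mathbf{h}}^{p-2}\,d\tau\ge\frac{2^{2-p}}{p}\norm{\mathbf{h}}^{p-2}.
\]
Using $\norm{\xx+\tau\mathbf{h}}\ge|c+\tau|\,\norm{\mathbf{h}}$ with $c=\inp{\xx}{\mathbf{h}}/\norm{\mathbf{h}}^2$ (projection onto the line), it suffices to show
\[
m(c)\eqdef\int_0^1(1-\tau)|c+\tau|^{p-2}\,d\tau\ge\frac{2^{2-p}}{p}\quad\text{for all } c\in\R .
\]
The function $m$ is convex in $c$, since $|c+\tau|^{p-2}$ is convex for $p\ge3$; for $2\le p<3$ I would handle it by direct estimation. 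The minimizer lies in $[-1,0]$. The weight $(1-\tau)$ biases it toward $c=-\tfrac12$ or slightly less, and evaluating there gives about $\frac{2^{2-p}}{p}$ up to constants.

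For a robust route that avoids locating the minimizer, split according to whether $\norm{\xx+\tau\mathbf{h}}\ge\tfrac12\norm{\mathbf{h}}$ on a subinterval of length about $1/2$. Since $\tau\mapsto\norm{\xx+\tau\mathbf{h}}$ is $\norm{\mathbf{h}}$-Lipschitz in distance from its minimum, the set where it is below $\tfrac12\norm{\mathbf{h}}$ is an interval of length at most $1$. Integrating the resulting piecewise bound yields the constant $2^{2-p}/p$.

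\textbf{Fallback.} If the calculus gets messy, I would instead cite the standard result (Nesterov, \emph{Lectures on Convex Optimization}, Lemma 4.2.3) that $\frac1p\norm{\cdot}^p$ is uniformly convex of degree $p$ with parameter $2^{2-p}$. That lemma is proved via the gradient-monotonicity inequality
\[
\inp{\norm{\xx}^{p-2}\xx-\norm{\yy}^{p-2}\yy}{\xx-\yy}\ge 2^{2-p}\norm{\xx-\yy}^p,
\]
followed by integration along the segment. The gradient inequality itself is shown by expanding, using $\norm{\xx}^{p-2}+\norm{\yy}^{p-2}\ge 2^{3-p}\norm{\xx-\yy}^{p-2}$, which follows from convexity of $t^{p-2}$ for $p\ge3$ and from a direct check for $2\le p<3$. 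This monotonicity route is the one I would actually commit to, with Step 2 as the integration that converts the monotonicity constant into the function-value constant $2^{2-p}/p$.
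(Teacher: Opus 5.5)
Your fallback of citing Nesterov's lemma matches the paper, which states this Fact without proof as a standard result. The problem is that both self-contained arguments you sketch fail.

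\textbf{Steps 1--2.} The inequality you need in Step~2, $\int_0^1(1-\tau)\norm{\xx+\tau\mathbf{h}}^{p-2}d\tau\ge\frac{2^{2-p}}{p}\norm{\mathbf{h}}^{p-2}$, is false for every $p>2$. Take $\yy=-\xx$, i.e.\ $\xx=-\tfrac12\mathbf{h}$ and $c=-\tfrac12$. Then $\norm{\xx+\tau\mathbf{h}}=|\tau-\tfrac12|\norm{\mathbf{h}}$ exactly, and $m(-\tfrac12)=\frac{2^{1-p}}{p-1}$. This falls short of $\frac{2^{2-p}}{p}$ by the factor $\frac{p}{2(p-1)}<1$; for $p=4$ one even gets $\min_c m(c)=\tfrac1{36}<\tfrac1{16}$.

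The cause is the bound $\nabla^2\phi(\mathbf{z})\succeq\norm{\mathbf{z}}^{p-2}I$. The discarded term $(p-2)\norm{\mathbf{z}}^{p-4}\mathbf{z}\mathbf{z}^\top$ carries a factor $p-1$ of the curvature when $\mathbf{h}\parallel\mathbf{z}$. So Step~2 asks for something false, and no convexity or splitting estimate of $m$ can rescue it.

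Step~2 is also not the integration that converts a monotonicity constant into a function-value constant. That conversion is simply
\[
\phi(\yy)-\phi(\xx)-\inp{\nabla\phi(\xx)}{\mathbf{h}}=\int_0^1\inp{\nabla\phi(\xx+t\mathbf{h})-\nabla\phi(\xx)}{\mathbf{h}}\,dt\ge\int_0^1 2^{2-p}t^{p-1}\norm{\mathbf{h}}^p\,dt=\frac{2^{2-p}}{p}\norm{\mathbf{h}}^p ,
\]
obtained by applying monotonicity to the pair $\xx+t\mathbf{h},\xx$ and dividing by $t$.

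\textbf{The monotonicity route.} Your auxiliary inequality $\norm{\xx}^{p-2}+\norm{\yy}^{p-2}\ge2^{3-p}\norm{\xx-\yy}^{p-2}$ is false for $2<p<3$: at $\yy=\0$ it reads $1\ge2^{3-p}$. Once the cross term is dropped, the best constant available in this range is $\tfrac12<2^{2-p}$, attained at $\yy=\0$. So no ``direct check'' exists. This range matters, since $p\in(2,3)$ corresponds exactly to $q\in(\tfrac32,2)$, which the paper covers.

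The repair is to keep the cross term. With $a=\norm{\xx}$, $b=\norm{\yy}$, $r=\norm{\xx-\yy}$,
\[
\inp{\norm{\xx}^{p-2}\xx-\norm{\yy}^{p-2}\yy}{\xx-\yy}=\tfrac12\big(a^{p-2}+b^{p-2}\big)r^2+\tfrac12\big(a^{p-2}-b^{p-2}\big)\big(a^2-b^2\big).
\]
After dividing by $r^2$, the left side is nonincreasing in $r$, because the last term is nonnegative. Meanwhile $2^{2-p}r^{p-2}$ is nondecreasing, so it suffices to check the extreme case $r=a+b$. There the left side equals $(a+b)(a^{p-1}+b^{p-1})$. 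The claim then reduces to $a^{p-1}+b^{p-1}\ge2^{2-p}(a+b)^{p-1}$, which is just convexity of $t\mapsto t^{p-1}$.

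This gives monotonicity constant $2^{2-p}$ for all $p\ge2$, and the displayed integration above then yields the Fact.
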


Next we present a simple fact regarding the Bregman divergence and the composite objective. We refer the readers to \citet{liu2024last} for a short proof of this fact.

\begin{fact}
    \label{fact:bregman-composite}
    Given $\xx^\star \eqdef \argmin_{\xx} F(\xx)$, we have for any $\xx\in \dom\psi$,
    \begin{equation}
        \label{eq:bregman-composite}   
        F(\xx) - F(\xx^\star) \geq \beta_f(\xx^\star, \xx) 
    \end{equation}
\end{fact}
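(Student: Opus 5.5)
The statement to prove is \Cref{fact:bregman-composite}: for $\xx^\star$ minimizing $F=f+\psi$ and any $\xx\in\dom\psi$, we have $F(\xx)-F(\xx^\star)\geq \beta_f(\xx^\star,\xx)$. The plan is to split $F(\xx)-F(\xx^\star)$ into a smooth part and a composite part. The smooth part gives the Bregman divergence exactly, plus a linear term. The composite part is controlled by the first-order optimality condition at $\xx^\star$.

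\textbf{Decomposition.} Write
\[
F(\xx)-F(\xx^\star)=\big(f(\xx)-f(\xx^\star)-\inp{\nabla f(\xx^\star)}{\xx-\xx^\star}\big)+\big(\psi(\xx)-\psi(\xx^\star)+\inp{\nabla f(\xx^\star)}{\xx-\xx^\star}\big).
\]
The first bracket is, by definition, $\beta_f(\xx^\star,\xx)$. It therefore suffices to show that the second bracket is nonnegative for every $\xx\in\dom\psi$.

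\textbf{Optimality condition.} Since $\xx^\star$ minimizes the convex function $f+\psi$, with $f$ differentiable on $\R^d$ and $\psi$ closed, proper and convex, the optimality condition reads $0\in\nabla f(\xx^\star)+\partial\psi(\xx^\star)$. This uses the subdifferential sum rule, which is valid here because $f$ is finite and differentiable everywhere.

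There is a more elementary route that avoids the sum rule. For $\tau\in(0,1]$, set $\xx_\tau=\xx^\star+\tau(\xx-\xx^\star)\in\dom\psi$. Optimality of $\xx^\star$ and convexity of $\psi$ give
\[
0\le F(\xx_\tau)-F(\xx^\star)\le f(\xx_\tau)-f(\xx^\star)+\tau\big(\psi(\xx)-\psi(\xx^\star)\big).
\]
Dividing by $\tau$ and letting $\tau\to 0^+$, differentiability of $f$ yields
\[
\inp{\nabla f(\xx^\star)}{\xx-\xx^\star}+\psi(\xx)-\psi(\xx^\star)\ge 0 .
\]
This is exactly the nonnegativity of the second bracket, which completes the argument.

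\textbf{Main obstacle.} The only delicate point is justifying the optimality condition when $\psi$ is extended-valued. The directional-limit argument handles this cleanly, since it only uses convexity of $\dom\psi$ and differentiability of $f$ at $\xx^\star$. I would therefore present that route rather than invoking subdifferential calculus.
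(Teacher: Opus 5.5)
Your proof is correct. The paper gives no proof of this fact and instead cites \citet{liu2024last}. There, the standard argument uses your decomposition and closes it with convex subdifferential calculus. Minimality gives $0\in\nabla f(\xx^\star)+\partial\psi(\xx^\star)$, so $-\nabla f(\xx^\star)$ is a subgradient of $\psi$ at $\xx^\star$. The subgradient inequality $\psi(\xx)\ge\psi(\xx^\star)-\inp{\nabla f(\xx^\star)}{\xx-\xx^\star}$ is then exactly the nonnegativity of your second bracket.

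Your preferred directional-limit argument derives that same inequality directly, along the segment from $\xx^\star$ to $\xx$ inside $\dom\psi$. This makes the proof self-contained and more general. It uses only differentiability of $f$ at $\xx^\star$ and convexity of $\psi$. It therefore needs neither convexity of $f$ nor the Moreau--Rockafellar sum rule, and it still works if $\xx^\star$ is only a local minimizer. The paper assumes convexity of $f$ anyway, so this extra generality is not needed here. The subdifferential route is shorter if convex calculus is taken for granted, and it names the specific subgradient of $\psi$ being used.
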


We also present the following lemma for stepsize tuning.

\begin{lemma}
    \label{lem:stepsize-gamma-lambda}
    Suppose when $\gamma \geq E$, we have
    \[
        \frac{1}{K}\sum_{k=0}^{K-1}\alpha_k \leq \frac{\gamma A}{K} + \frac{\lambda B}{K} + \frac{C}{\gamma \lambda^{q-1}} + \frac{D}{\gamma K}.
    \]
    Then choosing $\lambda \eqdef  \left(\frac{CK}{\gamma B}\right)^{\frac{1}{q}}$ and $\gamma \eqdef \max\{E, \sqrt{D/A}, \frac{2^{\frac{q}{q+1}}K^{\frac{1}{q+1}}B^{\frac{q-1}{q+1}}C^{\frac{1}{q+1}}}{A^{\frac{q}{q+1}}}\}$, we have:
    \begin{equation}
        \label{eq:stepsize-gamma-lambda}
        \frac{1}{K}\sum_{k=0}^{K-1} \alpha_k \leq \frac{AE}{K} + \frac{2\sqrt{AD}}{K} + \left(\frac{2^{2q+1}AB^{q-1}C}{K^q}\right)^{\frac{1}{q+1}}
    \end{equation}
\end{lemma}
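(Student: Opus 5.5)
The plan is to plug the two prescribed stepsizes into the four-term bound one at a time, bounding each term by a simple expression in $\gamma$, and only then use the three-way maximum defining $\gamma$ to split into the three target terms.

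\textbf{Step 1: substitute $\lambda$.} With $\lambda = (CK/(\gamma B))^{1/q}$ we get
\[
\frac{\lambda B}{K} = \frac{C^{1/q}B^{(q-1)/q}}{\gamma^{1/q}K^{(q-1)/q}}.
\]
For the third term, $\lambda^{q-1} = (CK/(\gamma B))^{(q-1)/q}$, so
\[
\frac{C}{\gamma\lambda^{q-1}} = \frac{C^{1/q}B^{(q-1)/q}}{\gamma^{1/q}K^{(q-1)/q}}
\]
as well. This equality is exactly what the choice of $\lambda$ is designed to produce. Hence the right-hand side becomes
\[
\frac{\gamma A}{K} + \frac{D}{\gamma K} + \frac{2C^{1/q}B^{(q-1)/q}}{\gamma^{1/q}K^{(q-1)/q}}.
\]

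\textbf{Step 2: use $\gamma$ as a maximum.} First, $\gamma \ge E$, so the hypothesis applies. Second, $\gamma \ge \sqrt{D/A}$ gives $\frac{D}{\gamma K} \le \frac{\sqrt{AD}}{K}$. Third, $\gamma \ge \gamma_3 := 2^{q/(q+1)}K^{1/(q+1)}B^{(q-1)/(q+1)}C^{1/(q+1)}A^{-q/(q+1)}$ lets us replace $\gamma^{-1/q}$ by $\gamma_3^{-1/q}$ in the last term. A direct exponent computation then gives
\[
\frac{2C^{1/q}B^{(q-1)/q}}{\gamma_3^{1/q}K^{(q-1)/q}} = \frac{\gamma_3 A}{K}.
\]
In other words, $\gamma_3$ is precisely the balancing point of $\gamma A/K$ against $2(\cdot)\gamma^{-1/q}$. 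This matching of powers of $K$, $B$, $C$, $A$ and $2$ is the one computation to check carefully, and it is the main place where constants could go wrong.

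\textbf{Step 3: bound the term $\gamma A/K$.} Write $\gamma = \max\{E,\sqrt{D/A},\gamma_3\} \le E + \sqrt{D/A} + \gamma_3$. Then
\[
\frac{\gamma A}{K} \le \frac{AE}{K} + \frac{\sqrt{AD}}{K} + \frac{\gamma_3 A}{K}.
\]
Collecting all contributions gives
\[
\frac{AE}{K} + \frac{2\sqrt{AD}}{K} + \frac{2\gamma_3 A}{K}.
\]

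\textbf{Step 4: simplify the last term.} Finally,
\[
\frac{2\gamma_3 A}{K} = 2\cdot 2^{q/(q+1)}\,A^{1/(q+1)}B^{(q-1)/(q+1)}C^{1/(q+1)}K^{-q/(q+1)} = \left(\frac{2^{2q+1}AB^{q-1}C}{K^q}\right)^{1/(q+1)},
\]
since $2^{q+1}\cdot 2^{q} = 2^{2q+1}$. This yields \eqref{eq:stepsize-gamma-lambda}.
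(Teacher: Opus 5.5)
Your proposal is correct and follows the same route as the paper: choose $\lambda$ so that $\lambda B/K = C/(\gamma\lambda^{q-1})$, then use each component of the maximum defining $\gamma$ to bound the remaining terms. You also supply details the paper leaves implicit, and they check out: the exponent computation showing that $\gamma_3$ balances $\gamma A/K$ against the merged $\lambda$-term, and the bound $\max\{E,\sqrt{D/A},\gamma_3\}\le E+\sqrt{D/A}+\gamma_3$, which yields exactly the constants $2\sqrt{AD}/K$ and $2\gamma_3 A/K$.
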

\begin{proof}

    For any $\gamma$, choose $\lambda$ such that 
    \[
        \frac{\lambda B}{K} = \frac{C}{\gamma\lambda^{q-1}},
    \]
    in other words, $\lambda = \left(\frac{CK}{\gamma B}\right)^{\frac{1}{q}}$. Then we have:
    \[
        \frac{1}{K}\sum_{k=0}^{K-1} \alpha_k \leq \frac{\gamma A}{K} + \frac{D}{\gamma K} + 2\left(\frac{B}{K}\right)^{\frac{q-1}{q}}\left(\frac{C}{\gamma}\right)^{\frac{1}{q}}
    \]
    Now choosing $\gamma =\max\{E, \sqrt{D/A}, \frac{2^{\frac{q}{q+1}}K^{\frac{1}{q+1}}B^{\frac{q-1}{q+1}}C^{\frac{1}{q+1}}}{A^{\frac{q}{q+1}}}\}$, we have:
    \[
        \frac{1}{K}\sum_{k=0}^{K-1} \alpha_k \leq \frac{AE}{K} + \frac{2\sqrt{AD}}{K} + \left(\frac{2^{2q+1}AB^{q-1}C}{K^q}\right)^{\frac{1}{q+1}}
    \]

\end{proof}

\section{Implementation of the power-prox}
\label{sec:implementation}
Here we briefly discuss the implementation of the power-prox method. In essence, at each iteration, for some point $\xx$ and some dual point $\gg$, we need to solve the following optimization problem:
\begin{equation}
    \label{eq:subproblem-general}
    \bar \xx \eqdef \argmin_{\yy\in \dom\psi} \bigg\{\inp{\gg}{\yy} + \psi(\yy) + \frac{\gamma}{2}\norm{\yy-\xx}^2 + \frac{\lambda}{p}\norm{\yy-\xx}^p\bigg\}.
\end{equation}
Note that one needs to scale the parameters and vectors properly to match the above formulation to \Cref{eq:subproblem-specific}. Solving \Cref{eq:subproblem-general} requires specific knowledge of the composite part $\psi$, and there is no generic solution to it. We work with the assumption that $\psi$ plus a polynomial should be easy to solve. For completeness, we discuss a specific procedure for when $\psi\equiv 0$. We note that the procedure is folklore and that power-prox has been used in the literature in different contexts; see, e.g., \citet{lu2018relatively}. 

When $\psi\equiv 0$, the problem reduces to
\[
    \bar \xx = \argmin_{\yy} \bigg\{\inp{\gg}{\yy} + \frac{\gamma}{2}\norm{\yy-\xx}^2 + \frac{\lambda}{p}\norm{\yy-\xx}^p\bigg\}.
\]      
By the first order optimality condition, we have:
\[
    \gg + \gamma(\bar \xx - \xx) + \lambda \norm{\bar \xx - \xx}^{p-2}(\bar \xx - \xx) = \0.
\]
For simplicity, denote $\dd = \bar \xx - \xx$. Then rearranging, we have:
\[
    \dd = -\frac{\gg}{\gamma + \lambda \norm{\dd}^{p-2}}.
\]
In particular, $\dd$ is in the opposite direction of $\gg$, and it suffices to solve for the magnitude of $\dd$. Let $r = \norm{\dd}$, then we have:
\[
    r = \frac{\norm{\gg}}{\gamma + \lambda r^{p-2}}.
\]
For $r>0$, the left-hand side is increasing while the right-hand side is decreasing. At $r=0$ the left-hand side is smaller than the right-hand side, and at $r\to +\infty$ the left-hand side is larger than the right-hand side. Therefore there exists a unique solution to the above equation. Moreover, this is a scalar polynomial equation of order $p-1$. For $p=3,4,5$, this can be solved in closed form. For other values of $p$, this can be solved efficiently using simple numerical procedures, e.g., the bisection method or Newton's method.

\section{Missing proofs from \Cref{sec:warmup}}
\label{sec:missing-proofs-sgd} 
In this section we present the missing proofs in \Cref{sec:warmup}. For simplicity, we define
\[
    \Psi_t(\xx ) \eqdef \sum_{s=0}^{t}\left(f(\xx_s) + \inp{\gg_s}{\xx-\xx_s}+\psi(\xx)\right) + \frac{\gamma}{2}\norm{\xx-\xx_0}^2 + \frac{\lambda}{p}\norm{\xx-\xx_0}^p.
\]
We also write $\Psi_t^\star\eqdef \Psi_t(\xx_{t+1})$. Recall that we define $\xx_{t+1}$ to be the minimizer of $\Psi_t$. It suffices to define $\Psi_{-1}\equiv0$.

By \Cref{fact:uniform-convexity}, we have that for any $\xx\in\dom\psi$:
\[
    \Psi_t(\xx) \geq \Psi_t^\star + \frac{\gamma}{2}\norm{\xx-\xx_{t+1}}^2 + \frac{\lambda}{p2^{p-2}} \norm{\xx-\xx_{t+1}}^p.
\]
Moreover, by the definition of $\Psi_t$, we have:
\begin{align*}
    \Psi_t(\xx) &= \sum_{s=0}^{t} \big(f(\xx_s) + \inp{\gg_s}{\xx-\xx_s}+\psi(\xx)\big) + \frac{\gamma}{2}\norm{\xx-\xx_0}^2 + \frac{\lambda}{p}\norm{\xx-\xx_0}^p\\
    & = \sum_{s=0}^{t} \big(f(\xx_s) + \inp{\nabla f(\xx_s)}{\xx-\xx_s}+\psi(\xx)\big) + \frac{\gamma}{2}\norm{\xx-\xx_0}^2 + \frac{\lambda}{p}\norm{\xx-\xx_0}^p\\
    &\quad + \sum_{s=0}^{t} \inp{\gg_s - \nabla f(\xx_s)}{\xx-\xx_s}\\
    & = \sum_{s=0}^{t} (F(\xx)- \beta_f(\xx_s,\xx))+ \frac{\gamma}{2}\norm{\xx-\xx_0}^2 + \frac{\lambda}{p}\norm{\xx-\xx_0}^p + \sum_{s=0}^{t} \inp{\gg_s - \nabla f(\xx_s)}{\xx-\xx_s}.
\end{align*}

Now by the definition of $\Psi_t^\star$, we have:
\begin{align*}
    \Psi_t^\star & = \Psi_{t-1}(\xx_{t+1}) + f(\xx_t) + \inp{\gg_t}{\xx_{t+1}-\xx_t} + \psi(\xx_{t+1}) \\
    & \overset{(i)}{\geq} \Psi_{t-1}^\star + \frac{\gamma}{2}\norm{\xx_{t+1}-\xx_t}^2 + \frac{\lambda}{p2^{p-2}} \norm{\xx_{t+1}-\xx_t}^p + f(\xx_t) + \inp{\gg_t}{\xx_{t+1}-\xx_t} + \psi(\xx_{t+1})\\
    &= \Psi_{t-1}^\star + \frac{\gamma}{2}\norm{\xx_{t+1}-\xx_t}^2 + \frac{\lambda}{p2^{p-2}} \norm{\xx_{t+1}-\xx_t}^p + f(\xx_t) + \inp{\nabla f(\xx_t)}{\xx_{t+1}-\xx_t} + \psi(\xx_{t+1})\\
    &\quad + \inp{\gg_t - \nabla f(\xx_t)}{\xx_{t+1}-\xx_t}\\
    & \overset{(ii)}{\geq} \Psi_{t-1}^\star + \frac{\gamma-L}{2}\norm{\xx_{t+1}-\xx_t}^2 + \frac{\lambda}{p2^{p-2}} \norm{\xx_{t+1}-\xx_t}^p + F(\xx_{t+1})  + \inp{\gg_t - \nabla f(\xx_t)}{\xx_{t+1}-\xx_t},
\end{align*}
where in $(i)$ we used \Cref{fact:uniform-convexity} and in $(ii)$ we used \Cref{assumption:smoothness}. Rearranging the terms and summing from $t=0$ to $T-1$, we have:
\begin{align*}
    &\sum_{t=0}^{T-1} F(\xx_{t+1}) + \frac{\gamma}{2}\sum_{t=0}^{T-1}\norm{\xx_{t+1}-\xx_t}^2 + \frac{\lambda}{p2^{p-2}} \sum_{t=0}^{T-1}\norm{\xx_{t+1}-\xx_t}^p\\
    \leq & \Psi_{T-1}^\star+ \sum_{t=0}^{T-1}\inp{\gg_t-\nabla f(\xx_t)}{\xx_t-\xx_{t+1}}\\
    \leq & \Psi_{T-1}(\xx) - \frac{\gamma}{2}\norm{\xx-\xx_T}^2 - \frac{\lambda}{p2^{p-2}} \norm{\xx-\xx_T}^p + \sum_{t=0}^{T-1}\inp{\gg_t-\nabla f(\xx_t)}{\xx_t-\xx_{t+1}}\\
    = & \sum_{t=0}^{T-1} (F(\xx)- \beta_f(\xx_t,\xx)) + \frac{\gamma}{2}\norm{\xx-\xx_0}^2 + \frac{\lambda}{p}\norm{\xx-\xx_0}^p - \frac{\gamma}{2}\norm{\xx-\xx_T}^2 - \frac{\lambda}{p2^{p-2}} \norm{\xx-\xx_T}^p \\
    & + \sum_{t=0}^{T-1}\inp{\gg_t-\nabla f(\xx_t)}{\xx-\xx_{t+1}}.
\end{align*}
Now rearranging again, taking expectations, and setting $\xx = \xx^\star$, we have:
\begin{align*}
    & \sum_{t=0}^{T-1}\Eb{F(\xx_{t+1})-F^\star} + \frac{\gamma}{2}\sum_{t=0}^{T-1}\Eb{\norm{\xx_{t+1}-\xx_t}^2} + \frac{\lambda}{p2^{p-2}} \sum_{t=0}^{T-1}\Eb{\norm{\xx_{t+1}-\xx_t}^p}\\
    \overset{(iii)}{\leq} & \frac{\gamma R_0^2}{2} + \frac{\lambda R_0^p}{p} + \sum_{t=0}^{T-1}\Eb{\inp{\gg_t-\nabla f(\xx_t)}{\xx^\star-\xx_{t+1}}},
\end{align*}
where in $(iii)$ we used \Cref{assumption:convexity}, which implies that $\beta_f(\xx_t,\xx) \geq 0$. Now, since $\gg_t\eqdef \nabla f_{i_t}(\xx_t)$ where $i_t$ is sampled uniformly at random from $\{0,\ldots,n-1\}$ and independently from the past (in particular, independently from $\xx_t$ and $\xx^\star$), we have that $\Eb{\inp{\gg_t-\nabla f(\xx_t)}{\xx^\star}} = \Eb{\inp{\gg_t-\nabla f(\xx_t)}{\xx_t}}=0$.
Therefore:
\begin{align*}
    & \sum_{t=0}^{T-1}\Eb{F(\xx_{t+1})-F^\star} + \frac{\gamma}{2}\sum_{t=0}^{T-1}\Eb{\norm{\xx_{t+1}-\xx_t}^2} + \frac{\lambda}{p2^{p-2}} \sum_{t=0}^{T-1}\Eb{\norm{\xx_{t+1}-\xx_t}^p}\\
    \leq & \frac{\gamma R_0^2}{2} + \frac{\lambda R_0^p}{p} + \sum_{t=0}^{T-1}\Eb{\inp{\gg_t-\nabla f(\xx_t)}{\xx_t-\xx_{t+1}}}.
\end{align*}
To upper bound the residual error, we use Young's inequality:
\begin{align*}
    &\Eb{\inp{\gg_t-\nabla f(\xx_t)}{\xx_t-\xx_{t+1}}} \\
    = & \Eb{\inp{\nabla f_{i_t}(\xx_t)-\nabla f_{i_t}(\xx^\star) +\nabla f(\xx^\star)-\nabla f(\xx_t)}{\xx_t-\xx_{t+1}}} + \Eb{\inp{\nabla f_{i_t}(\xx^\star)-\nabla f(\xx^\star)}{\xx_t-\xx_{t+1}}}\\
    \overset{(iv)}{\leq} & \frac{1}{2\gamma}\Eb{\norm{\nabla f_{i_t}(\xx_t)-\nabla f_{i_t}(\xx^\star) +\nabla f(\xx^\star)-\nabla f(\xx_t)}^2} + \frac{\gamma}{2}\Eb{\norm{\xx_t-\xx_{t+1}}^2} + \frac{1}{q\alpha^q}\Eb{\norm{\nabla f_{i_t}(\xx^\star)-\nabla f(\xx^\star)}^q}\\
    & + \frac{\alpha^p}{p}\Eb{\norm{\xx_t-\xx_{t+1}}^p}\\
    \leq & \frac{1}{\gamma} \Eb{\norm{\nabla f_{i_t}(\xx_t)-\nabla f_{i_t}(\xx^\star)}^2+\norm{\nabla f(\xx^\star)-\nabla f(\xx_t)}^2} +\frac{\gamma}{2}\Eb{\norm{\xx_t-\xx_{t+1}}^2} + \frac{1}{q\alpha^q}\Eb{\norm{\nabla f_{i_t}(\xx^\star)-\nabla f(\xx^\star)}^q}\\
    & + \frac{\alpha^p}{p}\Eb{\norm{\xx_t-\xx_{t+1}}^p}\\
    \overset{(v)}{\leq} & \frac{2L}{\gamma}\Eb{\beta_{f_{i_t}}(\xx^\star,\xx_t)+\beta_f(\xx^\star,\xx_t)} +\frac{\gamma}{2}\Eb{\norm{\xx_t-\xx_{t+1}}^2} + \frac{1}{q\alpha^q}\Eb{\norm{\nabla f_{i_t}(\xx^\star)-\nabla f(\xx^\star)}^q} + \frac{\alpha^p}{p}\Eb{\norm{\xx_t-\xx_{t+1}}^p}\\
    \overset{(vi)}{\leq} & \frac{4L}{\gamma}\Eb{\beta_f(\xx^\star,\xx_t)} +\frac{\gamma}{2}\Eb{\norm{\xx_t-\xx_{t+1}}^2} + \frac{1}{q\alpha^q}\Eb{\norm{\nabla f_{i_t}(\xx^\star)-\nabla f(\xx^\star)}^q} + \frac{\alpha^p}{p}\Eb{\norm{\xx_t-\xx_{t+1}}^p}\\
    \overset{(vii)}{\leq} & \frac{4L}{\gamma}\Eb{F(\xx_t)-F^\star} +\frac{\gamma}{2}\Eb{\norm{\xx_t-\xx_{t+1}}^2} + \frac{1}{q\alpha^q}\Eb{\norm{\nabla f_{i_t}(\xx^\star)-\nabla f(\xx^\star)}^q} + \frac{\alpha^p}{p}\Eb{\norm{\xx_t-\xx_{t+1}}^p}\\
    \overset{(viii)}{\leq} & \frac{4L}{\gamma}\Eb{F(\xx_t)-F^\star} +\frac{\gamma}{2}\Eb{\norm{\xx_t-\xx_{t+1}}^2} + \frac{1}{q\alpha^q}\sigma_\star^q + \frac{\alpha^p}{p}\Eb{\norm{\xx_t-\xx_{t+1}}^p},
\end{align*}
where in $(iv)$ we used Young's inequality, in $(v)$ we used \Cref{assumption:smoothness}, in $(vi)$ we used the definition of $i_t$, which implies that $\Eb{\beta_{f_{i_t}}(\xx^\star,\xx_t)}=\Eb{\beta_f(\xx^\star,\xx_t)}$, in $(vii)$ we used \Cref{fact:bregman-composite}, and in $(viii)$ we used \Cref{assumption:heavy-tail}.

Now plugging the above bound back into the previous inequality, we have:
\[
    \sum_{t=0}^{T-1}\Eb{F(\xx_{t+1})-F^\star}  +\frac{1}{p} \left(\frac{\lambda}{2^{p-2}}-\alpha^p \right)\sum_{t=0}^{T-1}\Eb{\norm{\xx_{t+1}-\xx_t}^p}
    \leq  \frac{\gamma R_0^2}{2} + \frac{\lambda R_0^p}{p} + \frac{4L}{\gamma}\sum_{t=0}^{T-1}\Eb{F(\xx_t)-F^\star} + \frac{\sigma_\star^qT}{q\alpha^q} 
\]
Assuming that $\gamma\geq 8L$, setting $\alpha^p = \frac{\lambda}{2^{p-2}}$, and rearranging the terms, and dividing by $T$ on both sides, we have:
\[
    \frac{1}{T}\sum_{t=0}^{T-1}\Eb{F(\xx_{t+1})-F^\star}  \leq  \frac{\gamma R_0^2}{T} + \frac{2\lambda R_0^p}{pT} + \frac{8L}{\gamma T }\left(F(\xx_0)-F^\star\right) + \frac{2^{\frac{p-2}{p-1}}\sigma^q_\star }{q\lambda^{q-1}}.
\]
Now we set $\gamma = \max\{8L, \sqrt{\frac{8L(F(\xx_0)-F^\star)}{R_0^2}}\}$, and $\lambda = \left(\frac{2p\sigma_\star^qT}{qR_0^p}\right)^{\frac{1}{q}}$, then we have:
\[
    \frac{1}{T}\sum_{t=0}^{T-1}\Eb{F(\xx_{t+1})-F^\star}  \leq  \frac{8LR_0^2 + \sqrt{8L(F(\xx_0)-F^\star)R_0^2}}{T} + \frac{8\sigma_\star R_0}{T^{\frac{q-1}{q}}}
\]
Setting each term less than $\frac{\varepsilon}{2}$, we get the desired result.

\section{Missing proofs from \Cref{sec:sketch-convergence-analysis}}
\label{sec:missing-proofs-incremental} 
In this section we present the missing proofs from \Cref{sec:sketch-convergence-analysis}.

We now present the proof of \Cref{lem:epoch-descent}. Again, for ease of presentation, write 
\[
    \Phi_k(\xx) \eqdef \sum_{e=0}^{k}\sum_{i=0}^{n-1} \big(f(\xx^e)+ \inp{\gg_e^i}{\xx-\xx^e}+\psi(\xx)\big)  + \frac{\gamma}{2} \norm{\xx - \xx_0}^2  + \frac{\lambda}{p} \norm{\xx-\xx_0}^p.
\]
and $\Phi_k^\star\eqdef \Phi_k(\xx^{k+1})$. We also write $\Phi_{-1} \equiv 0$ so that $\xx_0=\xx^0 = \argmin_{\xx\in\dom\psi}\Phi_{-1}(\xx)$.

\EpochDescent*

\begin{proof}
    By the uniform convexity of $\Phi_k$ (of order both $2$ and $p$), we have for any $\xx\in \dom\psi$,
    \[
        \Phi_k(\xx) \geq \Phi_k^\star + \frac{\gamma}{2}\norm{\xx-\xx^{k+1}}^2 + \frac{\lambda}{p2^{p-2}}\norm{\xx-\xx^{k+1}}^p.
    \]
    Moreover,
    \begin{align*}
        \Phi_k(\xx) &= \sum_{e=0}^{k}\sum_{i=0}^{n-1} \big(f(\xx^e)+ \inp{\gg_e^i}{\xx-\xx^e}+\psi(\xx)\big)  + \frac{\gamma}{2} \norm{\xx - \xx_0}^2  + \frac{\lambda}{p} \norm{\xx-\xx_0}^p \\
        &= \sum_{e=0}^{k}\sum_{i=0}^{n-1} \big(f(\xx^e)+ \inp{\nabla f(\xx^e)}{\xx-\xx^e}+\psi(\xx)\big)  + \frac{\gamma}{2} \norm{\xx - \xx_0}^2  + \frac{\lambda}{p} \norm{\xx-\xx_0}^p \\
        &\quad + \sum_{e=0}^{k}\sum_{i=0}^{n-1} \inp{\gg_e^i - \nabla f(\xx^e)}{\xx-\xx^e} \\
        &= n\sum_{e=0}^{k}F(\xx) - n\sum_{e=0}^{k}\beta_f(\xx^e,\xx)+ \frac{\gamma}{2} \norm{\xx - \xx_0}^2  + \frac{\lambda}{p} \norm{\xx-\xx_0}^p+ \sum_{e=0}^{k}\sum_{i=0}^{n-1} \inp{\gg_e^i - \nabla f(\xx^e)}{\xx-\xx^e}.
    \end{align*}
    Now by the definition of $\Phi_k$, we have:
    \begin{align*}
        \Phi_k^\star &= \Phi_{k-1}(\xx^{k+1}) + \sum_{i=0}^{n-1} \big(f(\xx^k)+ \inp{\gg_k^i}{\xx^{k+1}-\xx^k}+\psi(\xx^{k+1})\big) \\
        &\overset{(i)}{\geq} \Phi_{k-1}^\star + \frac{\gamma}{2}\norm{\xx^{k+1}-\xx^k}^2 + \frac{\lambda}{p2^{p-2}}\norm{\xx^{k+1}-\xx^k}^p + \sum_{i=0}^{n-1} \big(f(\xx^k)+ \inp{\gg_k^i}{\xx^{k+1}-\xx^k}+\psi(\xx^{k+1})\big)\\
        &= \Phi_{k-1}^\star + \frac{\gamma}{2}\norm{\xx^{k+1}-\xx^k}^2 + \frac{\lambda}{p2^{p-2}}\norm{\xx^{k+1}-\xx^k}^p + \sum_{i=0}^{n-1} \big(f(\xx^k)+ \inp{\nabla f(\xx^k)}{\xx^{k+1}-\xx^k}+\psi(\xx^{k+1})\big)\\
        &\quad + \sum_{i=0}^{n-1} \inp{\gg_k^i - \nabla f(\xx^k)}{\xx^{k+1}-\xx^k} \\
        &\overset{(ii)}{\geq} \Phi_{k-1}^\star + \frac{\gamma-nL}{2}\norm{\xx^{k+1}-\xx^k}^2 + \frac{\lambda}{p2^{p-2}}\norm{\xx^{k+1}-\xx^k}^p + nF(\xx^{k+1}) + \sum_{i=0}^{n-1} \inp{\gg_k^i - \nabla f(\xx^k)}{\xx^{k+1}-\xx^k},
    \end{align*}
    where in $(i)$ we used the uniform convexity, and in $(ii)$ we used \Cref{assumption:smoothness}, the smoothness of $f$.

    Now rearranging, summing from $k=0$ to $K-1$ and applying the previous inequalities, we have for any $\xx\in \dom\psi$,
    \begin{align*}
        &n\sum_{k=0}^{K-1}F(\xx^{k+1}) + \frac{\gamma-nL}{2}\sum_{k=0}^{K-1}\norm{\xx^{k+1}-\xx^k}^2 + \frac{\lambda}{p2^{p-2}}\sum_{k=0}^{K-1}\norm{\xx^{k+1}-\xx^k}^p \\
        \leq & \Phi_K^\star -\Phi_{-1}^\star+ \sum_{k=0}^{K-1}\sum_{i=0}^{n-1} \inp{\gg_k^i - \nabla f(\xx^k)}{\xx^k-\xx^{k+1}}\\
        = &  \Phi_K^\star+ \sum_{k=0}^{K-1}\sum_{i=0}^{n-1} \inp{\gg_k^i - \nabla f(\xx^k)}{\xx^k-\xx^{k+1}}\\
        \leq & \Phi_K(\xx) - \frac{\gamma}{2}\norm{\xx-\xx^{K}}^2 - \frac{\lambda}{p2^{p-2}}\norm{\xx-\xx^{K}}^p + \sum_{k=0}^{K-1}\sum_{i=0}^{n-1} \inp{\gg_k^i - \nabla f(\xx^k)}{\xx^k-\xx^{k+1}}\\
        =  & n\sum_{k=0}^{K-1}F(\xx) + \frac{\gamma}{2} \norm{\xx - \xx_0}^2  + \frac{\lambda}{p} \norm{\xx-\xx_0}^p - \frac{\gamma}{2}\norm{\xx-\xx^{K}}^2 - \frac{\lambda}{p2^{p-2}}\norm{\xx-\xx^{K}}^p\\
        &  + \sum_{k=0}^{K-1}\sum_{i=0}^{n-1}\big( \inp{\gg_k^i - \nabla f(\xx^k)}{\xx-\xx^{k+1}}-\beta_f(\xx^k,\xx)\big).
    \end{align*}
    Rearranging and taking $\xx\eqdef \xx^\star$, we get \Cref{eq:epoch-descent}.
\end{proof}
Next we present the proof of \Cref{lem:error-bound}.
\ErrorBound*
\begin{proof}
    First consider the individual terms:
    \begin{align*}
        \inp{\gg_k^i - \nabla f(\xx^k)}{\xx^\star-\xx^{k+1}}-\beta_f(\xx^k,\xx^\star) &= \inp{\nabla f_i(\xx_k^i) - \nabla f(\xx^k)}{\xx^\star-\xx^{k+1}}-\beta_f(\xx^k,\xx^\star)\\
        &= \inp{\nabla f_i(\xx_k^i) - \nabla f(\xx^k)}{\xx^k-\xx^{k+1}}+ \inp{\nabla f_i(\xx_k^i) - \nabla f(\xx^k)}{\xx^\star-\xx^k}\\
        &\quad -\big(f(\xx^\star)-f(\xx^k)-\inp{\nabla f(\xx^k)}{\xx^\star-\xx^k}\big)\\
        &=\inp{\nabla f_i(\xx_k^i) - \nabla f(\xx^k)}{\xx^k-\xx^{k+1}}+ \inp{\nabla f_i(\xx_k^i) }{\xx^\star-\xx^k} \\
        &\quad - \big(f(\xx^\star)-f(\xx^k)\big)\\
        &=\inp{\nabla f_i(\xx_k^i) - \nabla f(\xx^k)}{\xx^k-\xx^{k+1}}+ \inp{\nabla f_i(\xx_k^i) }{\xx^\star-\xx_k^i} \\
        &\quad + \inp{\nabla f_i(\xx_k^i)}{\xx_k^i-\xx^k}  - \big(f(\xx^\star)-f(\xx^k)\big)\\
        &\overset{(i)}{\leq} \inp{\nabla f_i(\xx_k^i) - \nabla f(\xx^k)}{\xx^k-\xx^{k+1}}+ f_i(\xx^\star) - f_i(\xx_k^i) \\
        &\quad + \inp{\nabla f_i(\xx_k^i)-\nabla f_i(\xx^k)}{\xx_k^i-\xx^k} + \inp{\nabla f_i(\xx^k)}{\xx_k^i-\xx^k} - \big(f(\xx^\star)-f(\xx^k)\big)\\
        &\overset{(ii)}{\leq}\inp{\nabla f_i(\xx_k^i) - \nabla f(\xx^k)}{\xx^k-\xx^{k+1}}+ f_i(\xx^\star) - f_i(\xx^k) \\
        &\quad + \inp{\nabla f_i(\xx_k^i)-\nabla f_i(\xx^k)}{\xx_k^i-\xx^k}  - \big(f(\xx^\star)-f(\xx^k)\big),
    \end{align*}
    where in $(i)$ and $(ii)$ we used the convexity of $f_i$. Now summing over $i=0$ to $n-1$, we obtain:
    \begin{align*}
        & \sum_{i=0}^{n-1}\big( \inp{\gg_k^i - \nabla f(\xx^k)}{\xx^\star-\xx^{k+1}}-\beta_f(\xx^k,\xx^\star)\big) \\
        \overset{(iii)}{\leq} & \sum_{i=0}^{n-1}\big(\inp{\nabla f_i(\xx_k^i) - \nabla f(\xx^k)}{\xx^k-\xx^{k+1}} + \inp{\nabla f_i(\xx_k^i)-\nabla f_i(\xx^k)}{\xx_k^i-\xx^k} \big)\\
        \overset{(iv)}{\leq} & \sum_{i=0}^{n-1}\big(\inp{\nabla f_i(\xx_k^i) - \nabla f(\xx^k)}{\xx^k-\xx^{k+1}} + L\norm{\xx_k^i-\xx^k}^2 \big)\\
        = & \sum_{i=0}^{n-1}\big(\inp{\nabla f_i(\xx_k^i) - \nabla f_i(\xx^k)}{\xx^k-\xx^{k+1}} + \inp{\nabla f_i(\xx^k)-\nabla f(\xx^k)}{\xx^k-\xx^{k+1}} + L\norm{\xx_k^i-\xx^k}^2 \big)\\
        \overset{(v)}{=} & \sum_{i=0}^{n-1}\big(\inp{\nabla f_i(\xx_k^i) - \nabla f_i(\xx^k)}{\xx^k-\xx^{k+1}} + L\norm{\xx_k^i-\xx^k}^2 \big)\\
        \overset{(vi)}{\leq} & \frac{Ln}{2}\norm{\xx^{k+1}-\xx^k}^2 + \frac{3L}{2}\sum_{i=0}^{n-1}\norm{\xx_k^i-\xx^k}^2,
    \end{align*}
    where in $(iii)$ we used the fact that $\sum_{i=0}^{n-1} f_i(\xx^\star) - f_i(\xx^k) = n (f(\xx^\star) - f(\xx^k))$, in $(iv)$ we used \Cref{assumption:smoothness}. In $(v)$ we used the fact that $\sum_{i=0}^{n-1}\inp{\nabla f_i(\xx^k)-\nabla f(\xx^k)}{\xx^k-\xx^{k+1}} = 0$. Finally in $(vi)$ we used Young's inequality and \Cref{assumption:smoothness}.
\end{proof}

Now we give the full proof of \Cref{lem:drift-bound}.
\DriftBound*
\begin{proof}
    For simplicity, for each epoch $k$, write $\Phi_k(\xx) \eqdef \sum_{e=0}^{k}\sum_{i=0}^{n-1} \big(f(\xx^e)+ \inp{\gg_e^i}{\xx-\xx^e}+\psi(\xx)\big)  + \frac{\gamma}{2} \norm{\xx - \xx_0}^2  + \frac{\lambda}{p} \norm{\xx-\xx_0}^p$, and write $\Phi_k^\star \eqdef \Phi_k(\xx^{k+1})$. We write $\Phi_{-1}\equiv 0$.
    
    By the definition of $\Phi_k^i$, we have:
    \begin{align*}
        \Phi_k^{i-1}(\xx_k^i) & = \Phi_{k-1}(\xx_k^i) + \sum_{j=0}^{i-1} \big(f(\xx^k)+ \inp{\gg_k^j}{\xx_k^i-\xx^k}+\psi(\xx_k^i)\big) \\
        &\overset{(i)}{\geq} \Phi_{k-1}^\star + \frac{\gamma}{2}\norm{\xx_k^i-\xx^k}^2 + \frac{\lambda}{p2^{p-2}}\norm{\xx_k^i-\xx^k}^p + \sum_{j=0}^{i-1} \big(f(\xx^k)+ \inp{\gg_k^j}{\xx_k^i-\xx^k}+\psi(\xx_k^i)\big) \\
        &= \Phi_{k-1}^\star + \frac{\gamma}{2}\norm{\xx_k^i-\xx^k}^2 + \frac{\lambda}{p2^{p-2}}\norm{\xx_k^i-\xx^k}^p + \sum_{j=0}^{i-1} \big(f(\xx^k)+ \inp{\nabla f(\xx^k)}{\xx_k^i-\xx^k}+\psi(\xx_k^i)\big) \\
        &\quad + \sum_{j=0}^{i-1} \inp{\gg_k^j - \nabla f(\xx^k)}{\xx_k^i-\xx^k} \\
        &\overset{(ii)}{\geq} \Phi_{k-1}^\star + \frac{\gamma-iL}{2}\norm{\xx_k^i-\xx^k}^2 + \frac{\lambda}{p2^{p-2}}\norm{\xx_k^i-\xx^k}^p + iF(\xx_k^i)  + \sum_{j=0}^{i-1} \inp{\gg_k^j - \nabla f(\xx^k)}{\xx_k^i-\xx^k},
    \end{align*}
    where in $(i)$ we used \Cref{fact:uniform-convexity} and in $(ii)$ we used \Cref{assumption:smoothness}.

    Moreover, again by \Cref{fact:uniform-convexity} and the definitions of $\Phi_k^i$ and $\Phi_{k-1}$, we have:
    \begin{align*}
        &\Phi_k^{i-1}(\xx_k^i) + \frac{\gamma}{2}\norm{\xx_k^i-\xx^k}^2 + \frac{\lambda}{p2^{p-2}} \norm{\xx_k^i-\xx^k}^p\\
        \leq & \Phi_k^{i-1}(\xx^k) = \Phi_{k-1}^\star + iF(\xx^k) 
    \end{align*}
    Putting the above two inequalities together, we have:
    \begin{align*}
        \frac{2\gamma -iL}{2} \norm{\xx_k^i-\xx^k}^2 + \frac{\lambda}{p2^{p-1}}\norm{\xx_k^i-\xx^k}^p &\leq i\big(F(\xx^k)-F(\xx_k^i)\big) +  \sum_{j=0}^{i-1} \inp{\gg_k^j - \nabla f(\xx^k)}{\xx^k-\xx_k^i}\\
        &\leq i\big(F(\xx^k)-F^\star\big) +  \sum_{j=0}^{i-1} \inp{\gg_k^j - \nabla f(\xx^k)}{\xx^k-\xx_k^i}
    \end{align*}
    Now we move on to upper bound $\sum_{j=0}^{i-1} \inp{\gg_k^j - \nabla f(\xx^k)}{\xx^k-\xx_k^i}$. This is where it becomes clear why the power-prox term is crucial. It is clear that the gradient error $\gg_k^j - \nabla f(\xx^k)$, after some possible manipulations, has to be bounded via some Cauchy-Schwarz type inequalities. Since the noise is heavy-tailed with power $q$, Cauchy-Schwarz will raise the drifts $\norm{\xx_k^i-\xx^k}$ to the conjugate power $p$. This will then be nicely canceled with the power-prox induced terms. We have:
    \begin{align*}
        \inp{\gg_k^j - \nabla f(\xx^k)}{\xx^k-\xx_k^i} &= \inp{\nabla f_j(\xx_k^j) -\nabla f_j(\xx^k) +\nabla f_j(\xx^k) -\nabla f_j(\xx^\star) + \nabla f(\xx^\star) -\nabla f(\xx^k)}{\xx^k-\xx_k^i}\\
        &\quad + \inp{\nabla f_j(\xx^\star) -\nabla f(\xx^\star)}{\xx^k-\xx_k^i}\\
        &\overset{(iii)}{\leq} \norm{\nabla f_j(\xx_k^j) -\nabla f_j(\xx^k)}\norm{\xx_k^i-\xx^k} + \norm{\nabla f_j(\xx^k) -\nabla f_j(\xx^\star)}\norm{\xx_k^i-\xx^k} \\
        &\quad + \norm{\nabla f(\xx^\star) -\nabla f(\xx^k)}\norm{\xx_k^i-\xx^k} + \norm{\nabla f_j(\xx^\star) -\nabla f(\xx^\star)}\norm{\xx_k^i-\xx^k}\\
        &\overset{(iv)}{\leq} \frac{L}{2}\norm{\xx_k^j -\xx^k}^2 + \frac{3L}{2}\norm{\xx_k^i-\xx^k}^2 + \beta_{f_j}(\xx^\star,\xx^k) + \beta_{f}(\xx^\star,\xx^k) \\
        &\quad + \frac{a^q\norm{\nabla f_j(\xx^\star)-\nabla f(\xx^\star)}^q}{q} + \frac{\norm{\xx_k^i-\xx^k}^p}{pa^p},
    \end{align*}
    where in $(iii)$ we used the Cauchy-Schwarz inequality, and in $(iv)$ we used \Cref{assumption:convexity,assumption:smoothness} and Young's inequality (for any $a>0$). The value of $a$ will be chosen later.

    Now summing from $j=0$ to $i-1$ and from $i=0$ to $n-1$, we have:
    \begin{align*}
        &\sum_{i=0}^{n-1}\sum_{j=0}^{i-1}\inp{\gg_k^j - \nabla f(\xx^k)}{\xx^k-\xx_k^i}\\
        \leq & \frac{3(n-1)L}{2}\sum_{i=0}^{n-1} \norm{\xx_k^i-\xx^k}^2 + \frac{1}{pa^p}\sum_{i=0}^{n-1}i\norm{\xx_k^i-\xx^k}^p + \sum_{i=0}^{n-1}(n-1-i)\beta_{f_i}(\xx^\star,\xx^k) \\
        & + \frac{n(n-1)}{2}\beta_f(\xx^\star,\xx^k)+ \frac{a^q}{q}\sum_{i=0}^{n-1}(n-1-i)\norm{\nabla f_i(\xx^\star)-\nabla f(\xx^\star)}^q\\
        \overset{(v)}{\leq} & \frac{3nL}{2}\sum_{i=0}^{n-1} \norm{\xx_k^i-\xx^k}^2  + \frac{n}{pa^p}\sum_{i=0}^{n-1}\norm{\xx_k^i-\xx^k}^p + \frac{3n^2}{2}\beta_f(\xx^\star,\xx^k) + \frac{a^qn}{q}\sum_{i=0}^{n-1}\norm{\nabla f_i(\xx^\star)-\nabla f(\xx^\star)}^q\\
        \overset{(vi)}{\leq} & \frac{3nL}{2}\sum_{i=0}^{n-1} \norm{\xx_k^i-\xx^k}^2  + \frac{n}{pa^p}\sum_{i=0}^{n-1}\norm{\xx_k^i-\xx^k}^p + \frac{3n^2}{2}\beta_f(\xx^\star,\xx^k) + \frac{a^qn^2\sigma_\star^q}{q}\\
        \overset{(vii)}{\leq} & \frac{3nL}{2}\sum_{i=0}^{n-1} \norm{\xx_k^i-\xx^k}^2  + \frac{n}{pa^p}\sum_{i=0}^{n-1}\norm{\xx_k^i-\xx^k}^p + \frac{3n^2}{2}(F(\xx^k)-F^\star) + \frac{a^qn^2\sigma_\star^q}{q}
    \end{align*}
    where in $(v)$ we used the fact that $\sum_{i=0}^{n-1}\beta_{f_i}(\xx^\star,\xx^k)=n\beta_f(\xx^\star,\xx^k)$, and in $(vi)$ we used \Cref{assumption:heavy-tail}. In $(vii)$ we used \Cref{fact:bregman-composite}.

    Therefore,
    \begin{align*}
        & \frac{2\gamma -nL}{2}\sum_{i=0}^{n-1}\norm{\xx_k^i-\xx^k}^2 + \frac{\lambda}{p2^{p-1}}\sum_{i=0}^{n-1}\norm{\xx_k^i-\xx^k}^p \\
        \leq & \frac{n^2}{2} \big(F(\xx^k) - F^\star\big) + \sum_{i=0}^{n-1}\sum_{j=0}^{i-1}\inp{\gg_k^j - \nabla f(\xx^k)}{\xx^k-\xx_k^i} \\
        \leq & \frac{3nL}{2}\sum_{i=0}^{n-1} \norm{\xx_k^i-\xx^k}^2  + \frac{n}{pa^p}\sum_{i=0}^{n-1}\norm{\xx_k^i-\xx^k}^p + 2n^2(F(\xx^k)-F^\star) + \frac{a^qn^2\sigma_\star^q}{q}.
    \end{align*}
    Rearranging the above, we have:
    \[
        2(\gamma-2nL)\sum_{i=0}^{n-1}\norm{\xx_k^i-\xx^k}^2 + \frac{1}{p} \big(\frac{\lambda}{2^{p-1}}-\frac{n}{a^p}\big)\norm{\xx_k^i-\xx^k}^p \leq 2n^2(F(\xx^k)-F^\star) + \frac{a^qn^2\sigma_\star^q}{q}.
    \]
    Now we can choose $a$ such that $a^p=\frac{n2^{p-1}}{\lambda}$, and assuming that $\gamma \geq 4nL$, we then get the desired result.
\end{proof}

Finally we give the proof of \Cref{thm:main-convergence}.

\MainConvergence*
\begin{proof}
    We first restate the following bound when $\gamma \geq 4nL$ (with both sides divided by $T=nK$):
    \[
        \frac{1}{K}\sum_{k=0}^{K-1}\big(F(\xx^{k+1})-F^\star\big) 
        \leq  \frac{2\gamma R_0^2}{Kn} + \frac{4\lambda R_0^p}{pKn}  + \frac{12nL(F(\xx_0)-F^\star)}{\gamma K} + \frac{12n^qL\sigma_\star^q}{q\gamma\lambda^{q-1}}.
    \]
    Following \Cref{lem:stepsize-gamma-lambda}, we write $E=4nL, A = \frac{2R_0^2}{n}, B=\frac{4R_0^p}{pn}, C=\frac{12n^qL\sigma_\star^q}{q}$ and $D=12nL(F(\xx_0)-F^\star)$. Plugging these in, we get the desired result.

    Then we should choose
    \[
        \lambda = \left(\frac{3pn^{q+1}L\sigma^q_{\star}K}{q\gamma R_0^q}\right)^{\frac{1}{q}},\quad \gamma = \max\left\{4nL, \sqrt{\frac{12n^2L(F(\xx_0)-F^\star)}{2R_0^2}}, \left(\frac{12\cdot 2^{q-2}Kn^{q+1}L\sigma_\star^q}{p^{q-1}R_0^q}\right)^{\frac{1}{q+1}}\right\}
    \]
    and the convergence rate becomes:
    \[
        \frac{1}{K}\sum_{k=0}^{K-1}\big(F(\xx^{k+1})-F^\star\big) \leq \frac{8LR_0^2}{K} + \frac{16\sqrt{LR_0^2(F(\xx_0)-F^\star)}}{K} + \left(\frac{4^{q+2}R_0^{q+2}L\sigma^q_\star}{p^{q-1}qK^q}\right)^{\frac{1}{q+1}}
    \]
    Setting each of the three terms to be at most $\frac{\varepsilon}{3}$, we get the desired result.
\end{proof}

\end{document}